\newcolumntype{L}{>{$}l<{$}} 
\newtheorem{thm}{Theorem}
\renewcommand{\L}{\mathcal{L}}
\newcommand{\Lv}{L_{\bf v}}
\newcommand{\Lb}{L_{\bf b}}
\newcommand{\Lvp}{L_{{\bf v}'}}
\newcommand{\gammat}{\widetilde{\gamma}}
\begin{document}

\title[Field line winding of braided vector fields]{Field line winding of braided vector fields in tubular subdomains}

\author{Christopher B Prior and Anthony R Yeates}

\address{Department of Mathematical Sciences, Durham University, South Road, Durham, DH1 3LE, UK}
\ead{christopher.prior@durham.ac.uk}
\vspace{10pt}
\begin{indented}
\item[]November 2018
\end{indented}

\begin{abstract}
Braided vector fields on spatial subdomains homeomorphic to the cylinder play a crucial role in applications such as solar and plasma physics, relativistic astrophysics, fluid and vortex dynamics, elasticity, and bio-elasticity. Often the vector field's topology -- the entanglement of its field lines -- is non-trivial, and can play a significant role in the vector field's evolution. We present a complete topological characterisation of such vector fields (up to isotopy) using a quantity called field line winding. This measures the entanglement of each field line with all other field lines of the vector field, and may be defined for an arbitrary tubular subdomain by prescribing a minimally distorted coordinate system. We propose how to define such coordinates, and prove that the resulting field line winding distribution uniquely classifies the topology of a braided vector field. The field line winding is similar to the field line helicity considered previously for magnetic (solenoidal) fields, but is a more fundamental measure of the field line topology because it does not conflate linking information with field strength.
\end{abstract}

%
%
\submitto{\JPA}
%
%
%

\section{Introduction}
The entanglement of vector field integral curves (field lines) in tubular subdomains homeomorphic to the cylinder has long been of wide interest.
For example, in stellar interiors, twisted bundles of magnetic field lines -- known as magnetic flux ropes -- are potentially important for the generation of large scale magnetic fields \cite{childress1995stretch,moffatt1985topological,nordlund1992dynamo,bao2010complex}. In stellar atmospheres such as the Sun's corona, twisted and braided magnetic fields play a crucial role in dynamic phenomena such as coronal heating, jet formation, and coronal mass ejections \cite{rust1996evidence,torok2005confined,wilmot2015overview,prior2016emergence,yeates2016global}. Another area where the dynamic entanglement of magnetic field is  crucial is the formation of relativistic jets from black holes and neutron stars \cite{komissarov1999numerical,heinz2000jet,contopoulos2009invariant,prior2019observational}. In thin body elasticity, structures such as ropes, cables, and biopolymers are treated as thin elastic tubes which can be internally twisted and which often react to this twisting by forming looped and knotted structures \cite{goriely1998spontaneous,thompson2002supercoiling,grason2009braided,starostin2014theory,prior2016extended}. A particularly well known example is plectoneme formation, whereby an elastic tube submitted to increasing twisting becomes unstable and loops at its centre, forming a self-contacting loop. As the twist is further increased the number of loops increase and the tube becomes supercoiled. DNA is an example of a biopolymer which exhibits this supercoiling \citeaffixed{forth2008abrupt}{and where twisting can be applied by optical tweezer experiments, \textit{e.g.},}. Supercoiling acts as a mechanism for large DNA molecules to compactify in order to fit into their local environment \cite{mullinger1980packing}. In all of these varied systems, a crucial aspect is their inability to disentangle, at least completely. This often informs their physical behaviour, and means that quantifying entanglement is important for understanding the systems themselves.

Motivation for our work comes from the particular context of magnetised plasmas, where the standard tool for quantifying entanglement is a volume integral called magnetic helicity, $H$ \cite{woltjer1958,berger1984topological}, analogous to a similar quantity in fluid dynamics \cite{moreau1961,moffatt1969degree,arnold1992topological}. For a ``closed'' magnetic field whose field line curves are tangent to the boundary, the value of this integral is invariant under an ideal evolution because field lines cannot reconnect and change their topology. As a result, having non-zero $H$ constrains the amount of magnetic energy that can be released \cite{arnold1992topological}. In such a magnetically-closed volume, it has long been known that $H$ may be written as an average of the Gauss linking integral between all pairs of magnetic field line curves \cite{pohl1968self,moffatt1969degree,cantarella2010new}. Recently, \citeasnoun{prior2014helicity} showed that the $H$ admits an analogous interpretation for so-called ``braided'' magnetic fields, where magnetic field lines connect between two planar boundaries rather than being tangent. In this case, $H$ is gauge dependent, but the authors showed that there is a particular gauge, the ``winding gauge'', in which $H$ is the equal to the average winding number between all pairs of field line curves. This extends the topological interpretation of helicity beyond the restricted case of tangent fields, which are not directly relevant for applications such as stellar atmospheres.

The main limitation of the winding-gauge helicity of \citeasnoun{prior2014helicity} is that it is defined only for domains foliated by planes, for which the standard definition of winding number can be used. However, in many contexts it would be useful to consider tubular domains of more general shape. One example would be the concentrated magnetic flux ropes that form during flux emergence into the solar atmosphere. Such flux ropes typically occupy curved and distorted domains that cannot be represented as foliations of parallel planes \cite{longcope2008}. Yet to characterise the internal entanglement of such structures it is necessary to isolate them from the surrounding magnetic field. Recent work has suggested that these structures may be more stable if they have a complex, ``braided'' internal field-line topology, rather than a uniformly twisted structure \cite{prior2016twisted}. Even in stable loops, \citeasnoun{wilmot2011} showed that different internal braiding patterns can lead to significantly different internal heating. It is important to note that the \citeasnoun{prior2014helicity} definition of winding-gauge helicity will fail not only if the tubular domain as a whole has a curved shape, but also if its end ``caps'' are not flat. Recently, \citeasnoun{prior2019interpreting} have shown that accounting for such wrinkling of the Sun's surface can significantly affect the measured input of magnetic helicity into the solar atmosphere. Observational studies of such helicity injection are important because they can potentially be used try to predict the ensuing behaviour of the atmospheric magnetic field \cite{romano2011solar,dalmasse2014photospheric,gopalswamy2017new}, as well as providing insight into the unobservable magnetic structure in the solar interior. Thus there is a need to consider winding numbers on more general domains, so as to quantify the entanglement of vector fields on those domains. This is the main aim of this paper.

To fully quantify the entanglement of a braided vector field, it is not sufficient to consider only a single, global integral like magnetic helicity, $H$. To give an extreme example, since $H$ is a signed quantity, one can have a topologically non-trivial vector field with zero total $H$, such as the Borromean rings \cite{delsordo2010} or the equivalent magnetic braid \cite{wilmot2015overview}. Moreover, in real systems that are not perfectly ideal, it is often the case that there are substantial local changes in entanglement, even when the total $H$ (the average entanglement) is conserved to a good approximation \cite{taylor1974,russell2015evolution}. A full understanding of the field line topology therefore requires the study of finer-grained invariants. As such, our main quantity of interest in this paper is not the analogue of $H$ (which would be the average winding among all pairs of field lines), but rather the analogue of field line helicity \cite{berger1988energy,yeates2013unique,aly2018,yeates2018relative,moraitis2019relative}. This we call the field line winding, as it is defined for each field line and measures the average winding of this field line with all others.

Our new measure of field line winding allows not only for more general shapes of domain but also for more general classes of vector field. We still assume that the vector field is ``braided'' (defined precisely in Section \ref{sec:ass}), but unlike \citeasnoun{prior2014helicity} we do not require it to be solenoidal (divergence free). As will be discussed in Section \ref{sec:helicity}, this amounts to removing the field-strength information from the winding measure. Magnetic helicity, for example, has units of magnetic flux squared, and is effectively a confluence of both topological and strength information. As shown by \citeasnoun{prior2014helicity}, a given set of field line curves can have different helicity depending on the field strength. In this paper, we focus purely on how to uniquely describe the field line topology, without regard to the strength.  This unweighted, purely topological measure applies equally to non-solenoidal fields, whereas the magnetic helicity, for example, is an ideal invariant only thanks to the solenoidal nature of the magnetic field. 

The paper is organised as follows. After stating our assumptions on the vector field and domain in Section \ref{sec:ass}, we give the general definition of winding numbers and field line winding in Section \ref{sec:flw}. These definitions rely on establishing a least distorted vector field on the domain, whose field lines have no mutual winding. This is addressed in Section \ref{sec:coords}. In Section \ref{sec:top}, we prove that the field line topology is completely determined by the distribution of field line winding. This is an extension of an earlier completeness result that applied only to a more restricted class of solenoidal vector fields \cite{yeates2013unique,yeates2014complete,prior2018quantifying}. As well as extending the class of domains and vector fields considered, we prove that the field line winding measure uniquely determines not only the field line mapping (as in the earlier papers) but also whether or not the two vector fields can be linked by an ideal evolution (an end-vanishing isotopy). Thus we also strengthen the earlier result for magnetic fields. The relation of our field line winding invariant to magnetic helicity is discussed in Section \ref{sec:helicity}, before concluding in Section \ref{sec:conclusion}.

\section{Assumptions and notation} \label{sec:ass}

\begin{figure}
\centering
\includegraphics[width=0.8\textwidth]{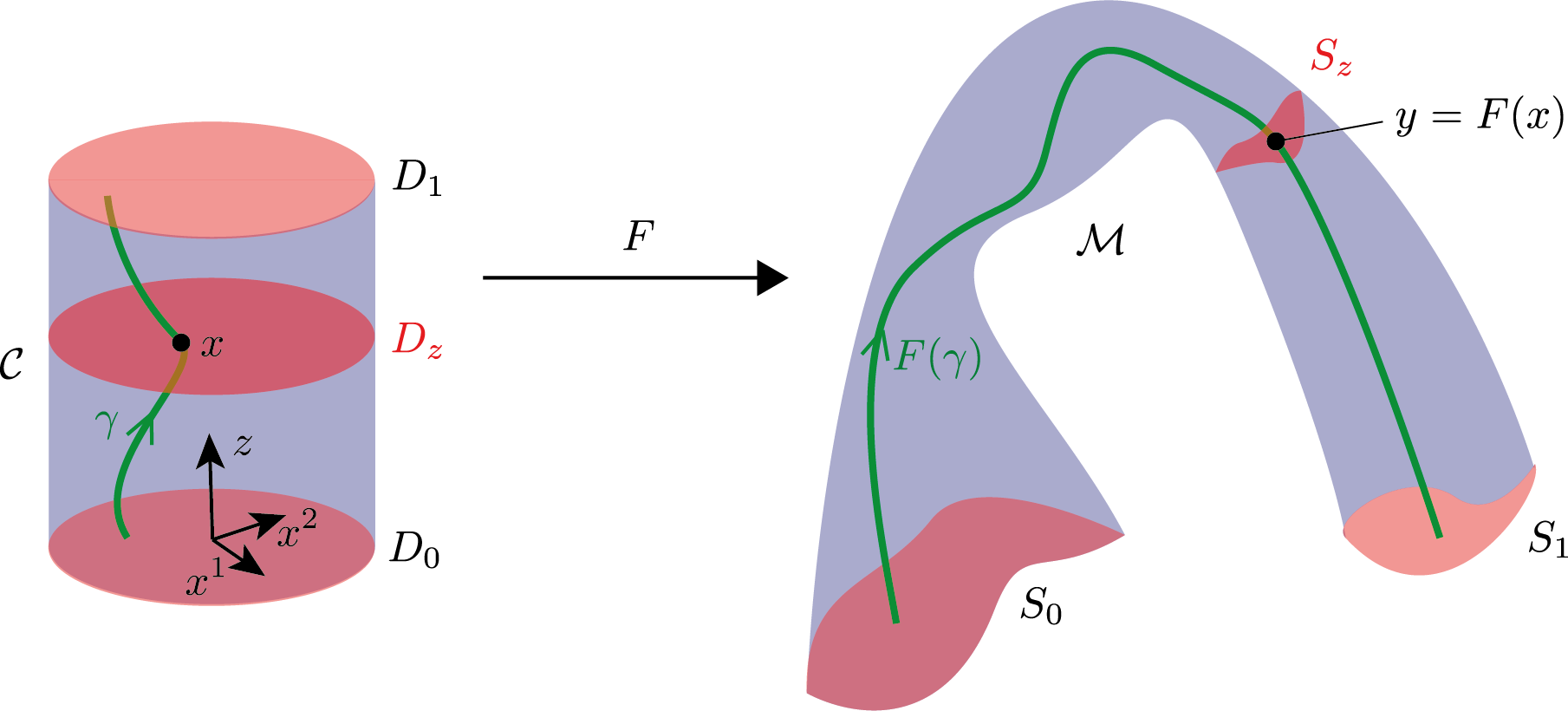}
\caption{\label{mfigure}A tubular subdomain $\mathcal{M}$ is an embedding of the unit cylinder in $\mathbb{R}^3$ given by a homeomorphism $F:\mathcal{C}\to\mathbb{R}^3$. The discs $D_z$ of constant $z$ in the cylinder $\mathcal{C}$ map to distorted surfaces $S_z$ that foliate $\mathcal{M}$. Shown in green are a field line $F(\gamma)\in\mathcal{M}$ and its preimage curve $\gamma\in\mathcal{C}$. A crucial part of this study will be to determine a minimally-distorted choice for the map $F$, within a given subdomain $\mathcal{M}$.
}
\end{figure}
\begin{figure}
\centering
\includegraphics[width=0.4\textwidth]{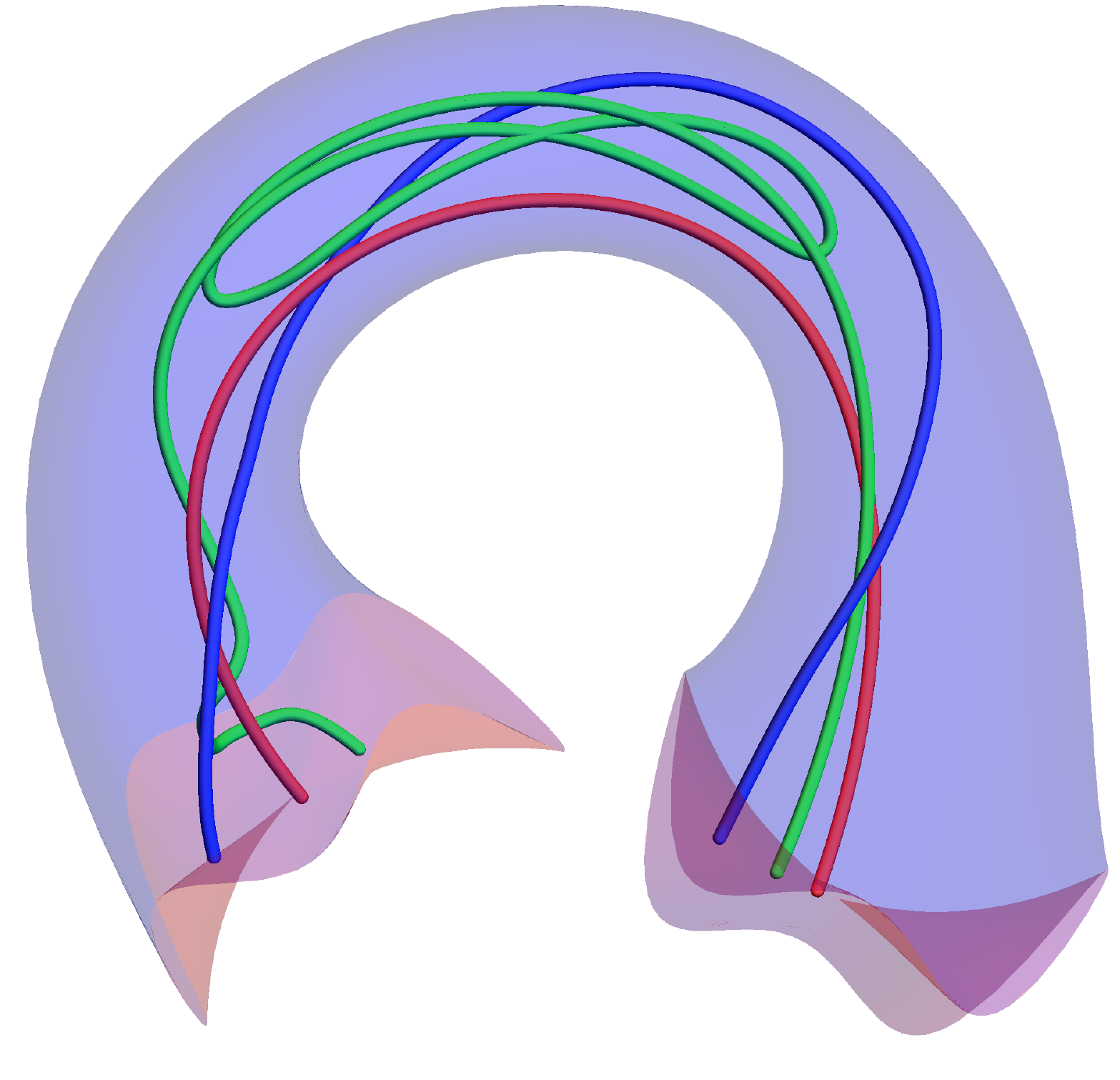}
\caption{\label{allowed}Three curves that could be field lines of a braided vector field as defined in Section \ref{sec:ass}. The green curve would not be monotone in $z$ but is still valid provided that the corresponding ${\bf v}$ can be deformed by isotopy to a field with field lines that are monotone in $z$.
}
\end{figure}

In this paper, we consider vector fields ${\bf v}$ defined on subdomains $\mathcal{M}\subset\mathbb{R}^3$. Both are restricted, in the following ways. 
The subdomain $\mathcal{M}$ is ``tubular'', meaning that it may be thought of as an embedding of the solid unit cylinder $\mathcal{C}$ in $\mathbb{R}^3$, determined by some homeomorphism $F:\mathcal{C}\to\mathbb{R}^3$, as in Figure \ref{mfigure}. This homeomorphism $F$ is by no means unique, and an important part of this paper will be to choose a suitable $F$. Note that the boundary $\partial\mathcal{M} = S_0 \cup S_1 \cup S_s$ is the union of two end caps $S_0, S_1$ and a side boundary $S_s$, and this composite boundary is assumed Lipschitz continuous. This means that $\mathcal{M}$ can be foliated by a set of surfaces $S_z,\, z \in[0,1]$. We will choose $F$ so that these surfaces are the images of the discs $D_z$ of constant $z$ in $\mathcal{C}$, where we define Cartesian coordinates on $\mathcal{C}$ as $(x^1,x^2,z)$. This notation recognizes the special role of the third coordinate in defining the foliation. In effect, $z$ becomes an axial coordinate for $\mathcal{M}$. Throughout this paper we will denote points in $\mathcal{C}$ by $x$ and points in $\mathcal{M}$ by $y$.

The vector field ${\bf v}$ is ``braided'', meaning that it is Lipschitz continuous, non-zero everywhere in $\mathcal{M}\cup\partial\mathcal{M}$, and satisfies the boundary conditions
\begin{equation}
\hat{\bf z}\cdot{\bf v} > 0 \quad \textrm{on $S_0$},\qquad
\hat{\bf z}\cdot{\bf v} > 0 \quad \textrm{on $S_1$},\qquad
\hat{\bf n}\cdot{\bf v} = 0 \quad \textrm{on $S_s$},
\end{equation}
where $\hat{\bf z}$ is a unit vector normal to $S_z$ pointing in the direction of increasing $z$ and $\hat{\bf n}$ is the unit normal to $S_s$. Thus  the integral curves/field lines of ${\bf v}$ are all rectifiable curves that connect from $S_0$ to $S_1$, as shown for the three example field lines in Figure \ref{allowed}. The final assumption on ${\bf v}$ is that its field lines can be deformed by isotopy to a vector field satisfying $\hat{\bf z}\cdot{\bf v} > 0$ on every $S_z$.  By an isotopy we mean a continuous set of homeomorphisms of the curves (which are automorphisms of the domain $\mathcal{M}$).
 For such fields, we can define a mapping from $S_ 0$ to $S_z$, or equivalently from $D_0$ to $D_z$ by following the field lines. These mappings are diffeomorphisms thanks to the Picard-Lindel\"of theorem \citeaffixed{arnolʹd2012geometrical}{see, \textit{e.g},}. In particular, we define a diffeomorphism $f(1)$ from $D_0$ to $D_1$ that will be used in Section \ref{sec:top}.
 
\section{Field line winding}\label{sec:flw}

Our goal in this paper is to describe the field-line topology of a braided vector field ${\bf v}$ in $\mathcal{M}$. In other words, properties of the field lines that remain unchanged under a continuous isotopy/deformation of ${\bf v}$. Since the field lines exit $\mathcal{M}$ through $S_0$ and $S_1$, we must qualify this statement: we consider properties that remain invariant under isotopies of ${\bf v}$ \emph{that vanish on $S_0$ and $S_1$}. These we call end-vanishing isotopies.

Our topological invariants are based on winding numbers between field lines. In this paper, we take advantage of the embedding of $\mathcal{M}$ and define the winding number between any two curves in $\mathcal{M}$ by calculating the winding number between their two preimage curves in $\mathcal{C}$. An alternative approach would be to work in $\mathcal{M}$ directly by choosing a metric structure. The advantage of our approach is that it allows us to define the angle in a simple and clear manner (\textit{i.e.}, on the cylinder), as well as simplifying the proof in Section \ref{sec:top}.
Figure \ref{fieldlinewindfigs}(a) shows field lines of an example vector field on a tubular domain $\mathcal{M}$, and Figure \ref{fieldlinewindfigs}(b) shows the preimages of these curves in $\mathcal{C}$ for a particular choice of the embedding map $F$.
How best to choose $F$ will be addressed in Section \ref{sec:coords}.

\begin{figure}
\subfloat[]{\includegraphics[width=3cm]{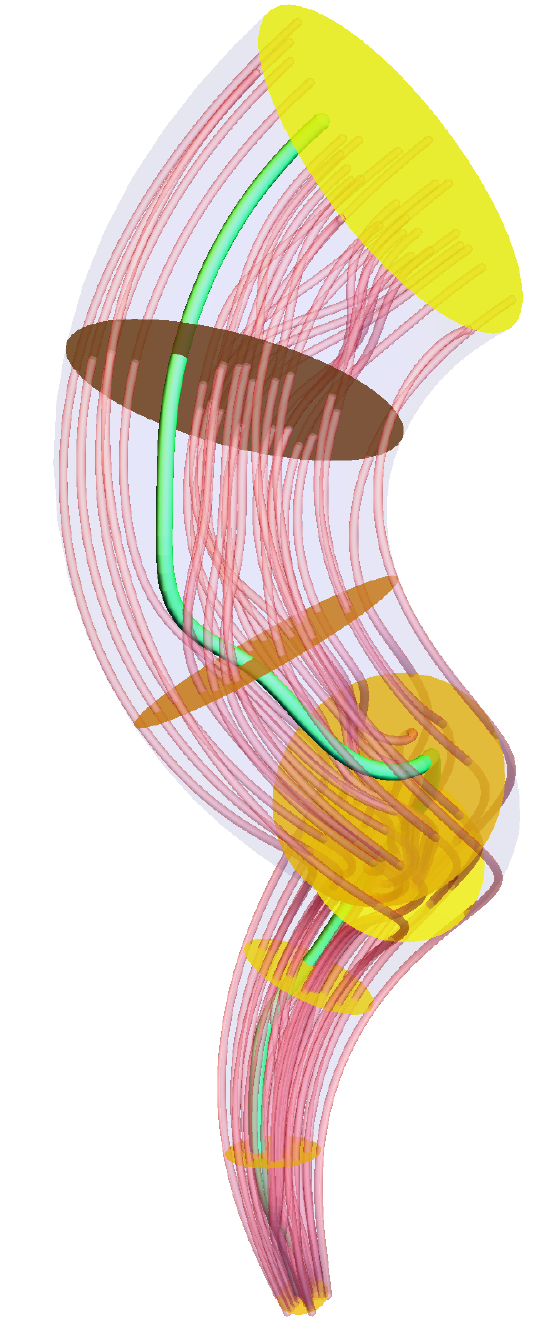}}\quad \quad \subfloat[]{\includegraphics[width=4cm]{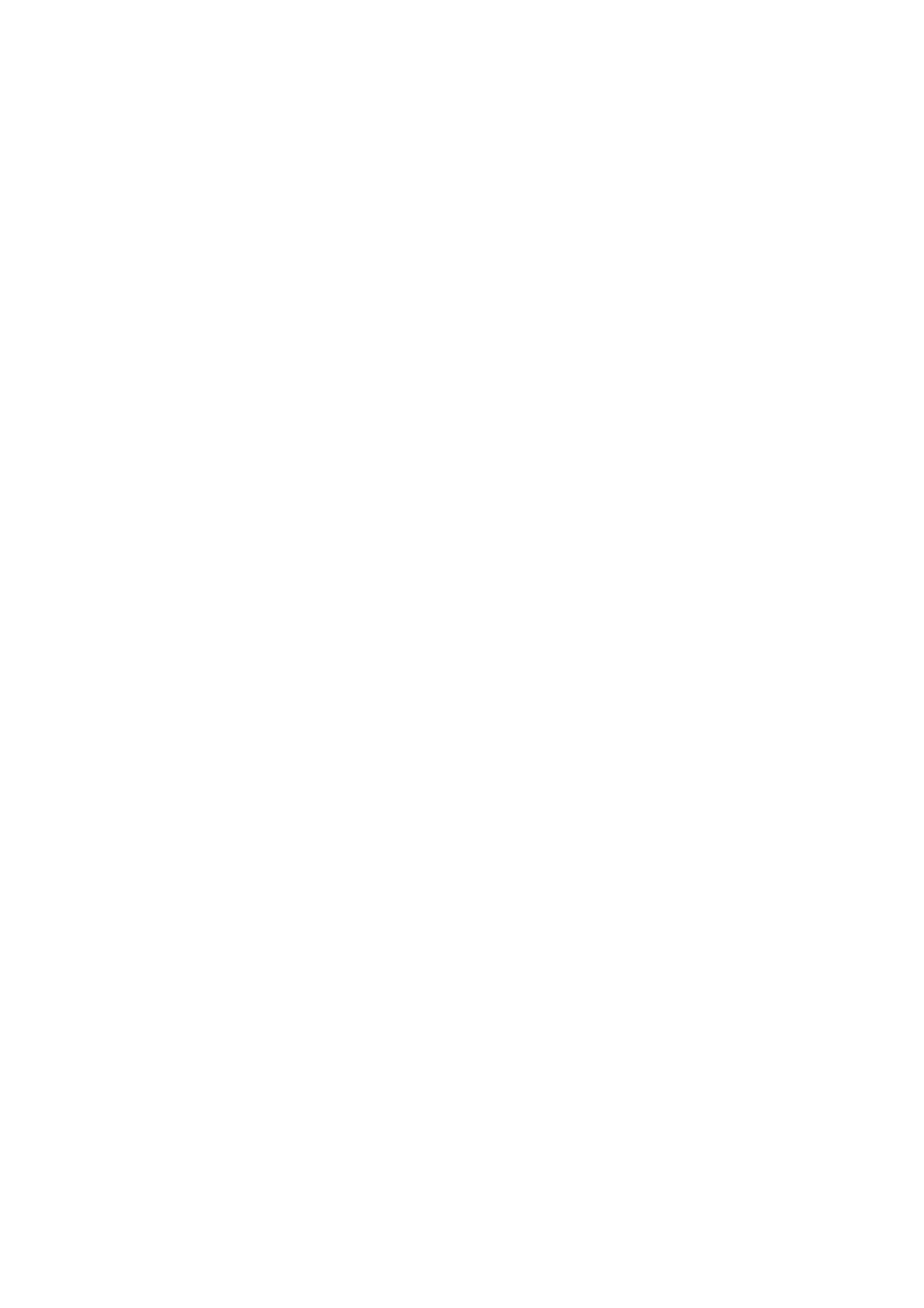}}\quad \subfloat[]{\includegraphics[width=7cm]{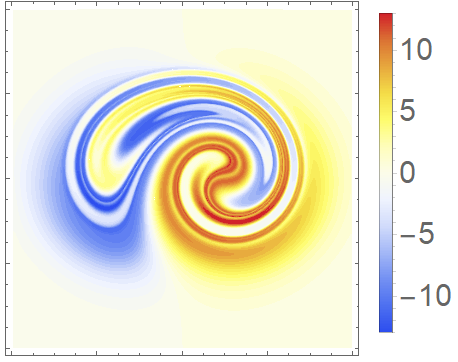}}.
\caption{\label{fieldlinewindfigs} Example illustrating the mapping of field line curves from a tubular domain $\mathcal{M}$ (a) to the cylinder $\mathcal{C}$ (b), along with the resulting distribution of field-line winding $\Lv$ in $D_0$ (c). The green curves in (a) and (b) show a particular field line $\gamma$ and its preimage. The interpretation of $\Lv(\gamma)$ is its average winding number with each of the other curves. Overall, this vector field shows significant entanglement due to the intermixing of positive and negative winding.}
\end{figure}

\subsection{Pairwise winding number}

\begin{figure}
    \centering
    \includegraphics[width=0.5\textwidth]{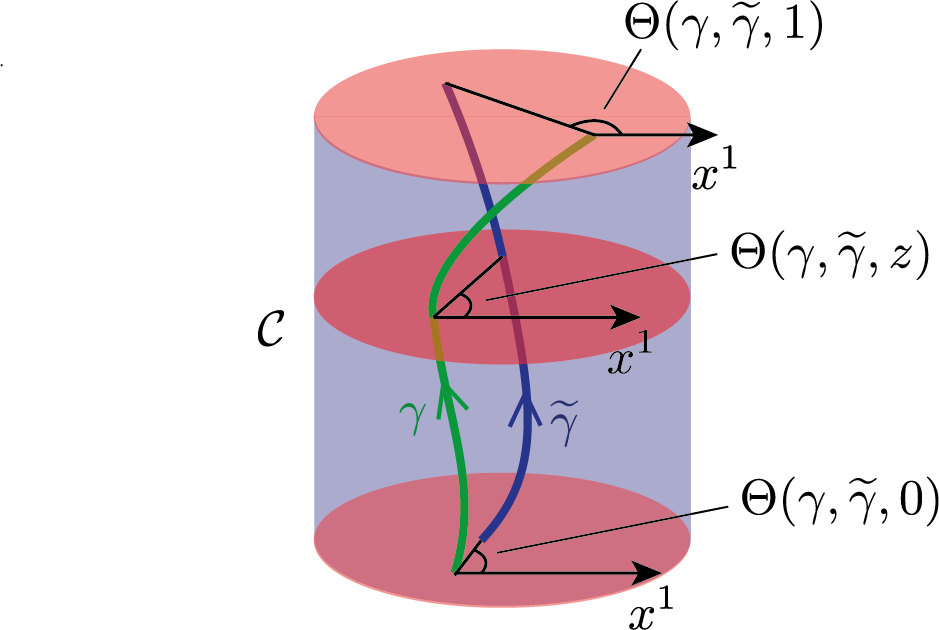}
    \caption{Definition of the angle $\Theta$ between two curves $\gamma$, $\gammat$ on $\mathcal{C}$.}
    \label{fig:Theta}
\end{figure}

Given two preimage curves $\gamma$, $\gammat$ in $\mathcal{C}$, the most basic topological invariant is their pairwise winding number. In any plane $D_z$, we can use the Cartesian components of the two curves $\gamma$ and $\gammat$ to define the ``angle'' between them,
\begin{equation}
\Theta(\gamma,\gammat,z) = \arctan\left(\frac{\gamma_2(z) - \gammat_2(z)}{\gamma_1(z) - \gammat_1(z)}\right),
\label{eqn:Theta}
\end{equation}
as shown in Figure \ref{fig:Theta}. The net change in this angle as we follow the curves from $z=0$ to $z=1$ is the pairwise winding number
\begin{eqnarray}
\L(\gamma,\gammat) &= \frac{1}{2\pi}\int_{0}^{1}\frac{\mathrm{d}}{\mathrm{d}z}\Theta(\gamma,\gammat, z)\,\mathrm{d}z \label{eqn:lorig}\\
&= \frac{1}{2\pi}\int_{0}^{1}\frac{(\gamma_1 - \gammat_1)\frac{\mathrm{d}}{\mathrm{d}z}(\gamma_2 - \gammat_2) - (\gamma_2 - \gammat_2)\frac{\mathrm{d}}{\mathrm{d}z}(\gamma_1 - \gammat_1)}{(\gamma_1-\gammat_1)^2 + (\gamma_2-\gammat_2)^2} \,\mathrm{d}z,
\label{eqn:lgg}
\end{eqnarray}
where primes denote differentiation by $z$. Alternatively, we may write
\begin{equation}
\L(\gamma,\gammat) = \frac{1}{2\pi}\Big(\Theta(\gamma,\gammat, 1) - \Theta(\gamma,\gammat, 0)\Big) + N(\gamma,\gammat),
\label{eqn:lN}
\end{equation}
where $N(\gamma,\gammat)$ counts the (signed) number of branch cut crossings  as we follow the curves in $z$. This makes clear that $\L(\gamma,\gammat)$ is invariant under any isotopy of the curves that does not move their end-points. In other words, it is a topological invariant.

\citeasnoun{berger2006writhe} showed that the definition of $\L(\gamma,\gammat)$ may be generalised to allow for non-monotonic curves like the green curve in Figure \ref{allowed}. Such a curve $\gamma$ is split into $n+1$ sections $\gamma^{(0)}, \ldots, \gamma^{(n)}$ using the $n$ turning points where $\mathrm{d}\gamma_z/\mathrm{d}z=0$. For each section, we define the indicator function
\begin{equation}
\sigma^{(i)} =
\left\{ \begin{array}{@{\kern2.5pt}lL}
    \hfill 1 & if $\mathrm{d}\gamma^{(i)}_z/\mathrm{d}z>0$,\\
    \hfill -1 & if $\mathrm{d}\gamma^{(i)}_z/\mathrm{d}z<0$,\\
     \hfill  0 & otherwise.
\end{array}\right.
\label{eqn:sigma}
\end{equation}
We split $\gammat$ and define $\widetilde{\sigma}^{(j)}$ in a similar way. Then the pairwise winding number is the sum
\begin{equation}
\L(\gamma,\gammat) = \sum_{i=0}^n\sum_{j=0}^{\widetilde{n}} \frac{\sigma^{(i)}\widetilde{\sigma}^{(j)}}{2\pi}\int_{z_{ij}^{\rm min}}^{z_{ij}^{\rm max}}\frac{\mathrm{d}}{\mathrm{d}z}\Theta(\gamma^{(i)}, \gammat^{(j)},z)\,\mathrm{d}z,
\label{eqn:l}
\end{equation}
where $[z_{ij}^{\rm min}, z_{ij}^{\rm max}]$ is the mutual range of $z$ values (if any) shared by the curve sections $\gamma^{(i)}$ and $\widetilde\gamma^{(j)}$. Once again, $\L(\gamma,\gammat)$ is invariant to any isotopy of the curves that fixes their endpoints. It reduces to (\ref{eqn:lorig}) if both curves have only a single section stretching from $z=0$ to $z=1$. (Although not needed here,  \citeasnoun{berger2006writhe} also showed that when the curves are closed, $\L(\gamma,\gammat)$ is equal to their Gauss linking integral, hence the notation ``$\L$''.)

\begin{figure}
    \centering
    \subfloat[]{\includegraphics[width=6cm]{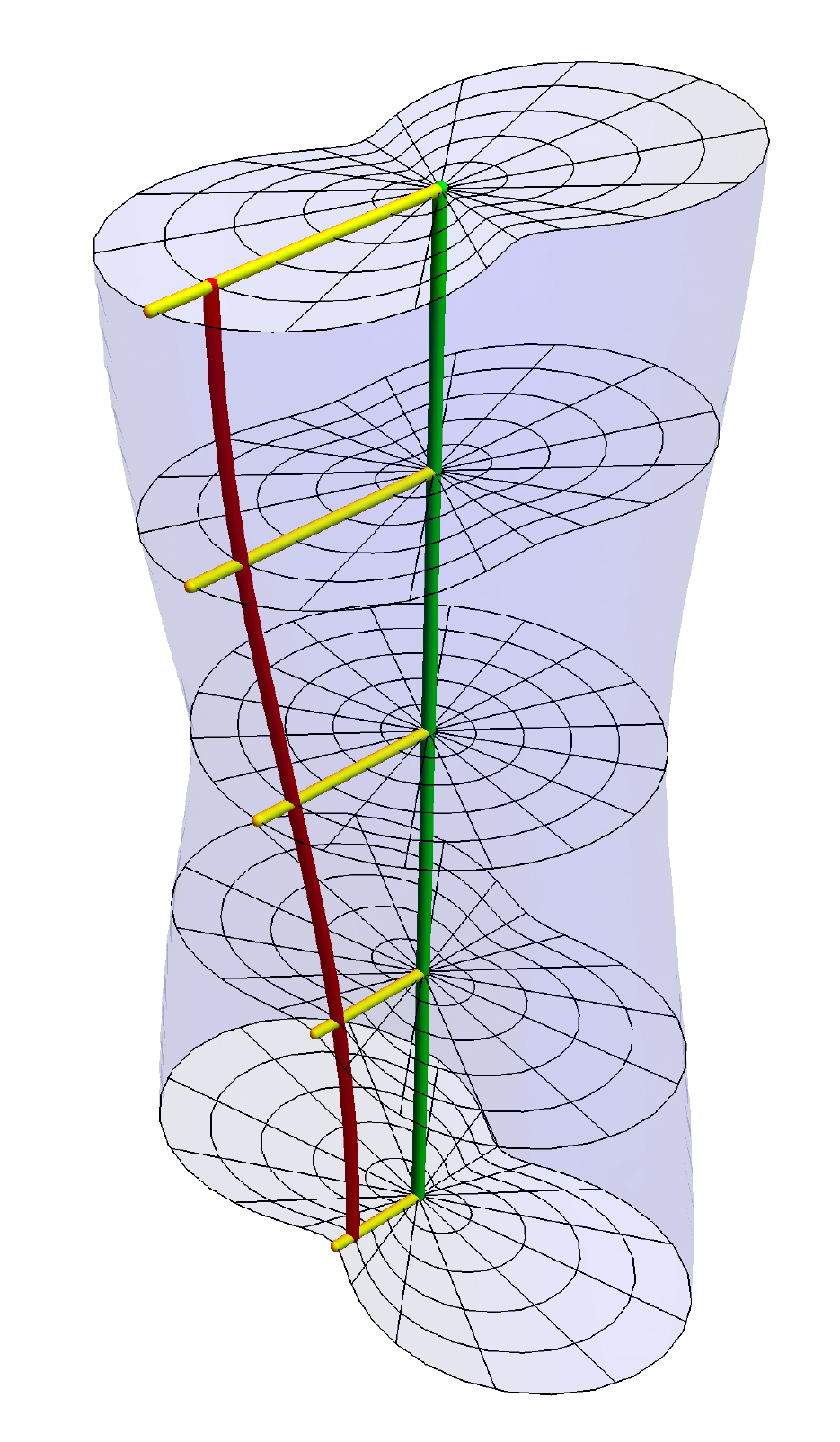}}\quad\subfloat[]{\includegraphics[width=6cm]{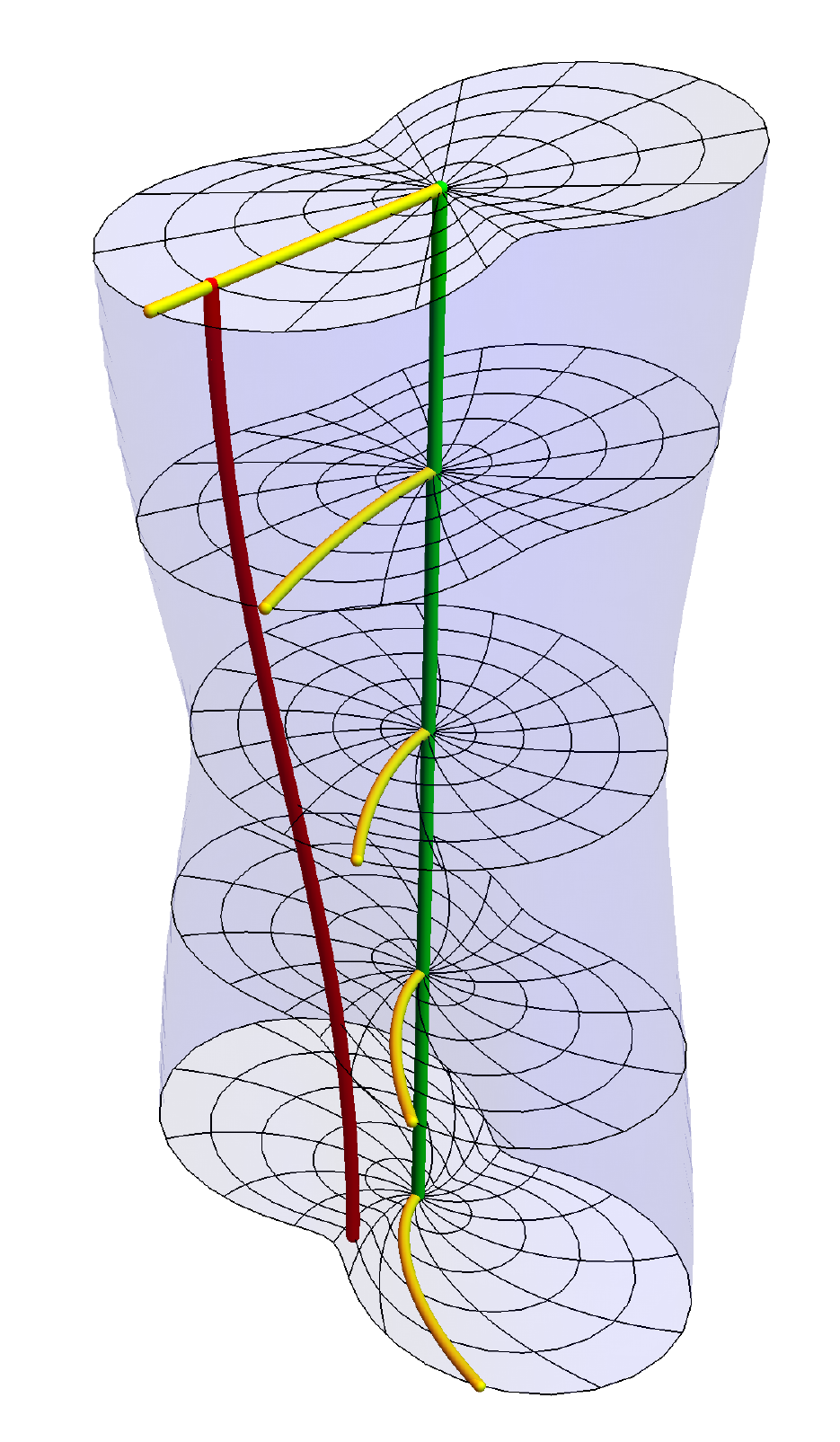}}
    \caption{How two different choices of mapping $F$ can lead to different pairwise winding number $\L(\gamma,\gammat)$. Here the red and green curves show $F(\gamma)$ and $F(\gammat)$, while yellow lines indicate the surface that $F(\gamma)$ should belong to in each coordinate system to give constant $\Theta(\gamma,\gammat,z)$. In (a) $F(\gamma)$ lies in this surface, so $\L(\gamma,\gammat)=0$, but in (b) it does not and $\L(\gamma,\gammat)\neq0$.}
    \label{goodbadcoords}
\end{figure}

It is important to realize that the value of $\L(\gamma,\gammat)$ depends on the choice of embedding map $F$. This is illustrated by Figure \ref{goodbadcoords}.
Nevertheless, $\L(\gamma,\gammat)$ is a topological invariant for any (fixed) choice of $F$, under deformations that vanish on $S_0$ and $S_1$.

\subsection{Field line winding of a vector field}

In principle, we could characterise the field line topology of ${\bf v}$ by the set of all pairwise winding numbers $\L(\gamma,\gammat)$ between all pairs of preimage field lines. However, we will show in Section \ref{sec:top} that this would entail significant redundancy. A more succinct description of the field line topology is given by a quantity we call the field line winding. This is defined for each field line $\gamma$, and is simply the average winding number of $\gamma$ with all other field lines in $\mathcal{C}$, \textit{i.e.},
\begin{equation}
\Lv(\gamma) = \int_{D_0}\L\big(\gamma,\gammat(x)\big)\,\mathrm{d}^2 x.
\label{eqn:lvg}
\end{equation}
Here $\gammat(x)$ denotes the field line starting from a point  $x\in D_0$. Since every field line $\gamma$ passes through $D_0$, $\Lv$ defines a scalar distribution on $D_0$. An example of such a distribution is shown in Figure \ref{fieldlinewindfigs}(c). 

By virtue of its definition, $\Lv$ is invariant under end vanishing isotopies of ${\bf v}$ that vanish on $S_0$, $S_1$. Like $\L(\gamma,\gammat)$, the field line winding $\Lv(\gamma)$ is dependent on the choice of mapping from $\mathcal{C}$ to $\mathcal{M}$; we will shortly fix this choice and hence the definition of  $\mathcal{L}$.

An alternative way to write (\ref{eqn:lvg}) is to use Cartesian coordinates $(x^1,x^2,z)$ on $\mathcal{C}$, then (\ref{eqn:lgg}) gives
\begin{equation}
\nonumber \Lv(\gamma)=\frac{1}{2\pi}\int_{0}^{1}\int_{D_z}\frac{(x^1-\gamma_1)\frac{\mathrm{d}}{\mathrm{d}z}[\gammat_2(x) - \gamma_2] - (x^2-\gamma_2)\frac{\mathrm{d}}{\mathrm{d}z}[\gammat_1(x) - \gamma_1]}{(x^1-\gamma_1)^2 +  (x^2-\gamma_2)^2}\,\mathrm{d}^2 x\rmd z,
\label{eqn:lvalt}
\end{equation}
where $\gammat(x)$ is the curve passing through the point $x=(x^1,x^2,z)\in D_z$. 

\section{Choice of embedding map} \label{sec:coords}

For measuring field line winding on a fixed subdomain $\mathcal{M}$, any choice of embedding map $F:\mathcal{C}\to\mathcal{M}$ will generate a topological measure $\Lv$ that is invariant under end-vanishing isotopies.  However, as we saw in Figure \ref{goodbadcoords}, the actual value of $\Lv$ for each field line depends on the chosen mapping. In order to allow for comparison between vector fields on different $\mathcal{M}$, it is useful to define $F$ uniquely and fix an absolute $\Lv$. In this section, we propose to do this by identifying the ``least distorted'' vector field on $\mathcal{M}$, that follows the shape of the tube as simply as possible. We then define $F$ so that the field lines of this least distorted field are the images of vertical lines on $\mathcal{C}$. This way, they will all have $\Lv\equiv0$. Vector fields with non-trivial field line topology will then have $\Lv\neq 0$, at least for a subset of their field lines.

\subsection{The least distorted field} \label{sec:uv}

A natural candidate for least-distorted braided vector field on $\mathcal{M}$ is a harmonic vector field ${\bf u}=\nabla\phi$ that satisfies
\begin{eqnarray}
&\nabla^2\phi = 0 \quad \textrm{in $\mathcal{M}$},\label{eqn:laplace}\\
&\hat{\bf z}\times\nabla\phi = 0 \quad \textrm{on $S_0$, $S_1$},\label{eqn:nxu}\\
&\hat{\bf n}\cdot\nabla\phi = 0 \quad \textrm{on $S_s$}.\label{eqn:side}
\end{eqnarray}
Here we reiterate that $\hat{\bf z}$ is the unit normal to the cross-sectional surfaces $S_z$, and $\hat{\bf n}$ is the unit normal to the side boundary $S_s$. Conditions (\ref{eqn:laplace})--(\ref{eqn:side}) define ${\bf u}$ uniquely up to magnitude, disregarding the trivial case of constant $\phi$.  Condition (\ref{eqn:nxu}) implies that $\phi$ is constant on each of the end caps, so that the integral curves are normal to these end caps. The actual values of these two constants control the magnitude and sign of ${\bf u}$, but not the shape of its integral curves. That this field can be assumed to exist on $\mathcal{M}$ is a result of Theorem 1.1 of \citeasnoun{gol2011hodge}, which provides a mixed boundary condition Hodge decomposition on Lipschitz domains. In particular the dimension of the harmonic field space, given by (1.6) in that paper, is $1$ for a cylinder; this dimension corresponds to the magnitude of the field ${\bf u}$.

To motivate the choice of condition (\ref{eqn:nxu}), consider any other harmonic field $\nabla \phi' = \nabla(\phi + \psi)$ in $\mathcal{M}$ that satisfies (\ref{eqn:side}) and has the same flux as $\nabla\phi$ through $S_0$ and $S_1$, meaning that
\begin{equation}
\int_{S_0}\hat{\bf z}\cdot\nabla \psi\,\mathrm{d}S =\int_{S_1}\hat{\bf z}\cdot\nabla \psi\,\mathrm{d}S = 0.
\end{equation}
It follows from these boundary conditions, and from the constancy of $\phi$ on $S_0$ and $S_1$, that the Dirichlet (stretching) energy of $\nabla\phi'$ is the orthogonal sum
\begin{equation}
\int_{\mathcal{M}}|\nabla\phi'|^2\,\mathrm{d}V = \int_{\mathcal{M}}|\nabla\phi|^2\,\mathrm{d}V + \int_{\mathcal{M}}|\nabla \psi|^2\,\mathrm{d}V.
\end{equation}
Therefore $\nabla\phi$ is the harmonic field in $\mathcal{M}$ that minimizes this energy for a given axial flux. In this sense, it has the least distorted integral curves. This is related to the notion of harmonic coordinates in Riemannian geometry \cite{deturck1981some}, which may be thought of as minimizing the Dirichlet energy of a coordinate map from $\mathcal{M}$ to $\mathbb{R}^3$.

\begin{figure}
\centering
\includegraphics[width=10cm]{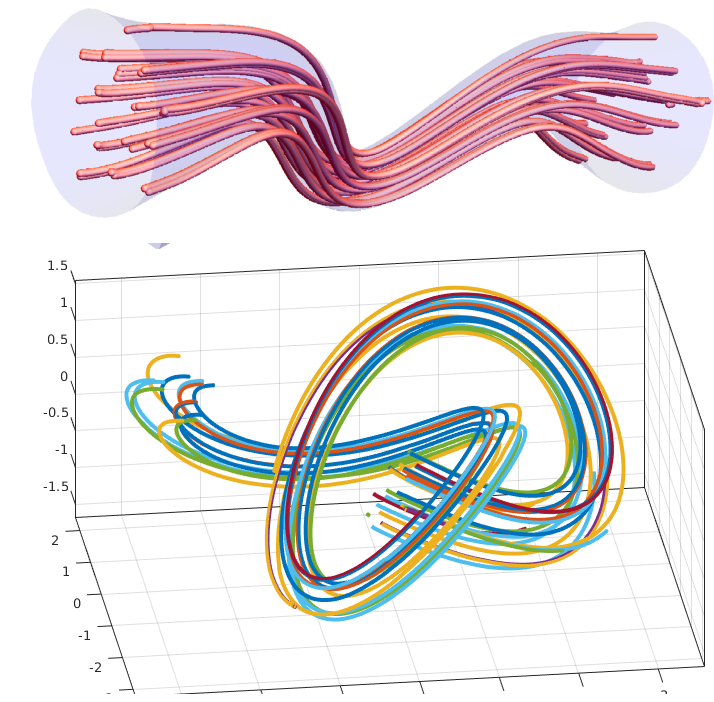}
\caption{\label{exampleu}Example domains and computations of their minimally distorted fields ${\bf u}$.}
\end{figure}

Examples of this field ${\bf u}$ are shown in Figure \ref{exampleu} for various domains, computed using the finite-element code IFEM2 in MATLAB \cite{chen2009integrated}. Note that the curves of this least distorted field flow through the domain, contouring to its shape and expanding (contracting) when the tube does. They are the natural analogue of straight lines in a Cartesian domain.

\subsection{Definition of the embedding map}

It is most convenient to specify $F^{-1}$ rather than $F$ itself directly. Our fundamental idea is that field lines of ${\bf u}$ should map to vertical lines in $\mathcal{C}$, but this leaves considerable freedom in the choice of $F^{-1}$, both in the $z$ coordinate (effectively distance along the field lines) and in which field line of ${\bf u}$ maps to which vertical line in $\mathcal{C}$.

To set the $z$ coordinate, note that ${\bf u}=\nabla\phi$ naturally defines a foliation $\{S_z\}$ of $\mathcal{M}$ by taking each $S_\phi$ to be the surface $\phi=\textrm{constant}$. Given the freedom in scaling the magnitude of ${\bf u}$, we may always arrange that $\phi=0$ on $S_0$ and $\phi=1$ on $S_1$. The fact that $\phi$ gives a valid foliation relies on the fact that ${\bf u}$ is non-zero everywhere on the interior of $\mathcal{M}$ (see \ref{leastdistortedproof}).

Choosing which field line of ${\bf u}$ maps to which vertical line is equivalent to choosing $F^{-1}$ on one of the end caps, say $S_0$. Equivalently, we must choose functions $(x^1(y), x^2(y))$ for $y\in S_0$.
To avoid measuring any spurious pairwise winding (cf. Figure \ref{goodbadcoords}), we require each of the functions $x^1$ and $x^2$ to be harmonic in the two-dimensional surface $S_0$, that is
\begin{equation}
\nabla^2_{S_0}x^{1}|_{S_0} = \nabla^2_{S_0}x^{2}|_{S_0}= 0. 
\end{equation}
where $\nabla^2_{S_0}$ indicates the Laplacian operator in the surface $S_0$. To specify a unique solution we also need to impose boundary conditions on $\partial S_0$. In order to ensure that $F^{-1}$ (and hence $F$) is one-to-one, these boundary conditions must take the form $x^{1}= \cos(\theta(s)),x^{2}=\sin(\theta(s))$, where $\theta(s)$ is an angle function of arclength $s$ along $\partial S_0$. An example is shown in Figure \ref{transcoords}, where we make the simple choice $\theta(s)=2\pi s/L$ for some arbitrary point $s=0$, where $L$ is the perimeter length of $S_0$.

\begin{figure}
    \centering
    \subfloat[]{\includegraphics[width=7cm]{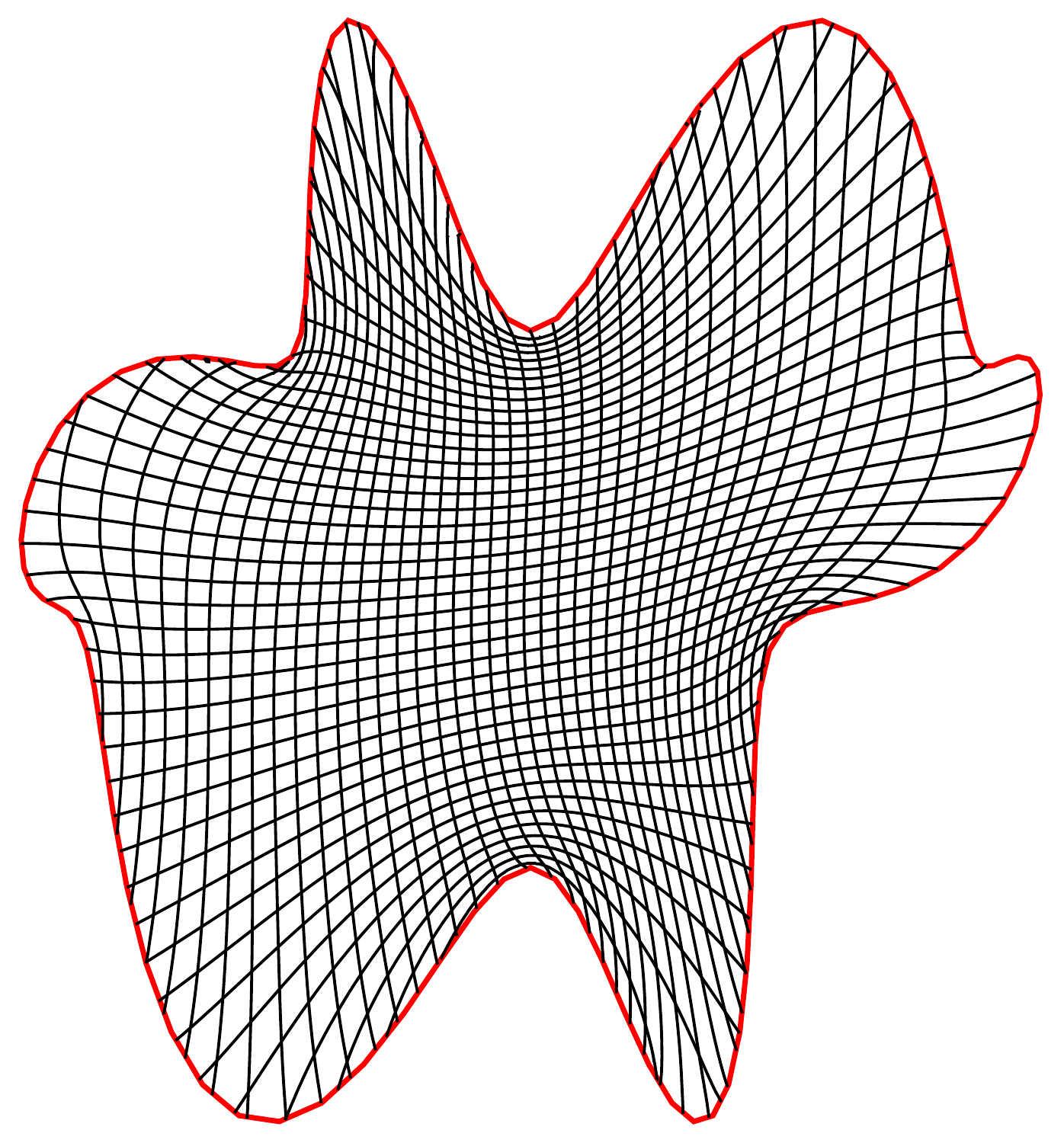}}\quad \subfloat[]{\includegraphics[width=7cm]{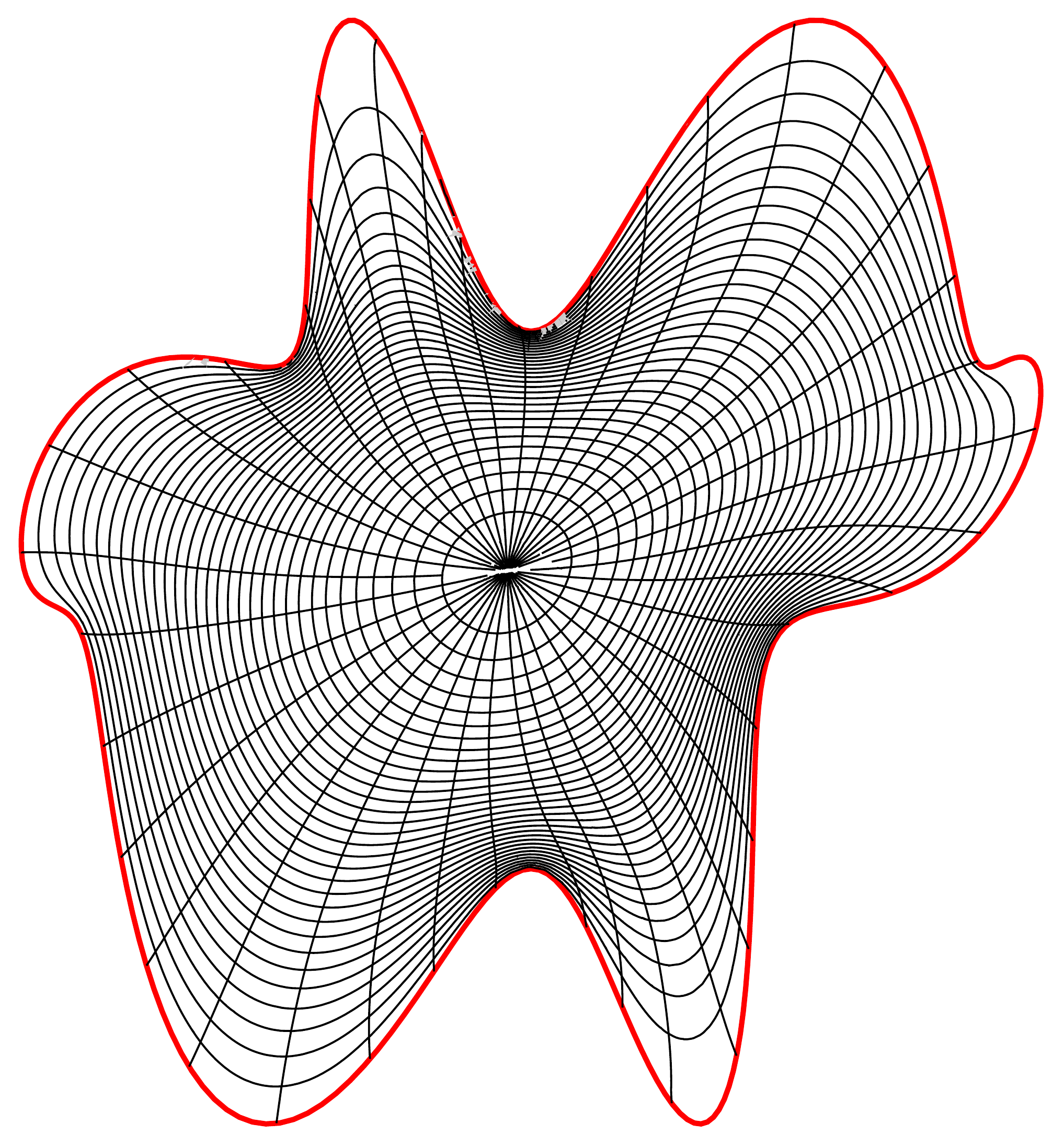}}
    \caption{Example illustrating our proposed definition of the mapping $F$ on the lower boundary, $S_0$. Black lines in (a) show the images under $F$ of $x^1$ and $x^2$ contours, whereas those in (b) show the images of polar coordinate $(r,\theta)$ contours under the same $F$.}
    \label{transcoords}
\end{figure}

\section{Topological classification} \label{sec:top}

In this section, we will show that the field-line winding $\Lv$ distribution for a braided vector field ${\bf v}$ completely determines the topology of its field lines, in the following sense.
\begin{thm}
Let ${\bf v}$, ${\bf v}'$ be two braided vector fields on the same domain $\mathcal{M}$ whose field lines on  $S_s$ are linked by an end-vanishing isotopy. Then the field lines of ${\bf v}$ and those of ${\bf v}'$ within $\mathcal{M}$ can be linked by an end-vanishing isotopy if and only if  $\Lv= \Lvp$ on all of $D_0$. 
\label{thm:top}
\end{thm}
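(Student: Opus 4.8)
This direction is a direct consequence of the invariance already built into $\L$. Suppose ${\bf v}$ and ${\bf v}'$ are joined by an end-vanishing isotopy $H_t$ of $\mathcal{M}$. Conjugating by the fixed embedding gives $h_t=F^{-1}\circ H_t\circ F$, an isotopy of $\mathcal{C}$ with $h_0=\mathrm{id}$ that restricts to the identity on $D_0\cup D_1$ and carries each preimage field line $\gamma$ of ${\bf v}$ to a preimage field line $\gamma'$ of ${\bf v}'$ without moving its endpoints. Since (\ref{eqn:lN}) shows that $\L(\gamma,\gammat)$ is invariant under any endpoint-fixing isotopy of the curves, $\L(\gamma,\gammat)=\L(\gamma',\gammat')$ for every pair; and because $h_1$ fixes $D_0$ pointwise the base point of each field line is preserved, so the integral (\ref{eqn:lvg}) is unchanged and $\Lv=\Lvp$ on $D_0$.

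\textbf{Topological reduction of sufficiency.} After identifying every leaf $S_z$ with a model disk $D^2$ through $F$, a braided field is the same data as a path $z\mapsto f(z)$ in $\mathrm{Diff}^+(D^2)$ with $f(0)=\mathrm{id}$ (the field-line flow maps $D_0\to D_z$), and an end-vanishing isotopy is exactly a homotopy of this path rel its two endpoints $f(0)=\mathrm{id}$ and $f(1)$, the boundary circle being free to move since $S_s$ is not held fixed. It is classical that $\mathrm{Diff}^+(D^2)\simeq SO(2)$, so $\pi_1(\mathrm{Diff}^+(D^2))\cong\mathbb{Z}$, and two paths sharing the endpoint $f(1)$ are homotopic rel endpoints precisely when a single integer obstruction vanishes, an obstruction read off from the restriction to $\partial D^2$, i.e.\ from the field lines on $S_s$. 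Concretely, if the maps agree I form the leafwise composition $G_z=f_{{\bf v}'}(z)\circ f_{\bf v}(z)^{-1}$, a loop in $\mathrm{Diff}^+(D^2)$ based at the identity ($G_0=G_1=\mathrm{id}$); its $\pi_1$-class is detected by the winding of $G_z|_{\partial D^2}$, i.e.\ the difference of the boundary windings of the two fields, which vanishes by the hypothesis that the $S_s$ field lines agree up to end-vanishing isotopy. Any null-homotopy of this loop rel endpoints is, leaf by leaf, an end-vanishing isotopy carrying ${\bf v}$ to ${\bf v}'$. Thus, granted the $S_s$ hypothesis, sufficiency reduces to the single implication $\Lv=\Lvp\ \Rightarrow\ f_{\bf v}(1)=f_{{\bf v}'}(1)$.

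\textbf{The crux, and the main obstacle.} What remains is to recover the field-line map from the winding distribution, and this is where the difficulty lies, because $\Lv$ is an \emph{average} over all field lines and averaging is generally lossy (the discrete analogue, the row sums of a linking matrix, certainly does not determine the matrix). The saving grace is the flow structure: the linking data here is not arbitrary but comes from a diffeomorphism foliation. My plan is to exploit the kernel in (\ref{eqn:lvalt}), which is the two-dimensional vortex kernel $\nabla^\perp\log|x-\gamma|$, so that $\Lv$ is a Newtonian-potential transform of the local rotation (shear) rate of the flow accumulated along field lines. Applying the surface Laplacian $\nabla^2_{D_0}$ in the base point should invert the logarithmic kernel and expose this local rotation rate pointwise, which then integrates along the field lines and across $D_0$ to reconstruct $f_{\bf v}(1)$ uniquely. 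The genuine obstacles are making the inversion rigorous through the nonlinear dependence of $\gamma_{x_0}(z)$ on its base point $x_0$, controlling the branch-cut term $N$ of (\ref{eqn:lN}) and the behaviour near $\partial D_0$, and handling non-monotone, merely Lipschitz field lines; I would first establish it for smooth, monotone representatives (permissible since $\Lv$ is an isotopy invariant and (\ref{eqn:l}) accommodates the general case) and then pass to the limit, using the earlier completeness results for restricted classes as a template.

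\textbf{Conclusion.} Assembling the three pieces closes the equivalence: necessity gives $\Lv=\Lvp$ whenever the fields are end-vanishing isotopic, while conversely $\Lv=\Lvp$ forces $f_{\bf v}(1)=f_{{\bf v}'}(1)$ by the crux, and this together with the $S_s$ hypothesis yields the end-vanishing isotopy through the leafwise contraction of $G_z$. The main obstacle is squarely the crux, namely showing that the \emph{averaged} winding $\Lv$, rather than the full family of pairwise windings $\L(\gamma,\gammat)$, already pins down the field-line diffeomorphism.
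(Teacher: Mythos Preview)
Your architecture matches the paper's: the necessity argument and the topological reduction via $\pi_1(\mathrm{Diff}^+(D^2))\cong\mathbb{Z}$ are essentially the paper's own, and you have correctly isolated the crux as the implication $\Lv=\Lvp\Rightarrow f(1)=f'(1)$.

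Where you diverge is in how to establish that implication. You propose applying the surface Laplacian in the base point to invert the logarithmic kernel; this is plausible heuristically but, as you yourself flag, the base-point dependence enters through $\gamma_{x_0}(z)$ at \emph{every} height via the nonlinear flow, so the Laplacian does not collapse cleanly to a delta function and you are left managing error terms, branch cuts and boundary behaviour. The paper sidesteps all of this by taking only the \emph{first} derivative: a direct computation (differentiate $\Theta$ with respect to the base point of $\gamma$ and integrate the result over $D_0$ in polar coordinates centred at that base point) yields the closed formula
\[
\mathrm{d}_\perp \Lv \;=\; f(1)^*\alpha - \alpha,\qquad \alpha=\tfrac{r^2}{2}\,\mathrm{d}\theta.
\]
Then $\Lv=\Lvp$ gives $g^*\alpha=\alpha$ for $g=f(1)^{-1}\circ f'(1)$; writing this out componentwise in $(r,\theta)$ and using $g|_{\partial D^2}=\mathrm{id}$ (from the $S_s$ hypothesis) forces $g_\theta=\theta$ and then $g_r=r$. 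This first-order route is exact, needs no inversion of an integral operator, and requires no smoothing or limiting argument; your Laplacian strategy is aiming at the same target by a harder and, as it stands, incomplete road.
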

Notice that this is purely a result about the field line curves: the magnitudes of ${\bf v}$ and ${\bf v}'$ do not matter, because $\Lv$ and $\Lvp$ depend only on the geometry of the curves. The boundary requirement on $S_s$ relates to a fundamental fact regarding the classification group of parametrized diffeomorphisms of the unit disc \cite{aref1984stirring,birman2016braids}. Essentially, if the corresponding field line mappings $f(1)$, $f'(1)$ differ on the side boundary, then this difference could be compensated by an opposite rotation on the interior, leading to both fields having the same $\Lv$.

\begin{proof}[Proof of Theorem \ref{thm:top}]
One direction is immediate: we know that $\Lv$ is invariant under any isotopy of ${\bf v}$ that vanishes on $S_0$ and $S_1$. The other direction is a deeper result. Our strategy will be to first show that equality of $\Lv$ and $\Lvp$ implies equality of the corresponding field line mappings $f(1)$, $f(1)'$ from $D_0$ to $D_1$. Then, we will show that this implies the existence of an end vanishing isotopy between the field lines of ${\bf v}$ and those of ${\bf v}'$. 
 
To show equality of the field line mappings $f(1)$ and $f'(1)$, consider the gradient of $\Lv(\gamma)$ with respect to the field line startpoint $(r,\theta)\in D_0$. This is most succinctly expressed in the language of differential forms as $\mathrm{d}_\perp\Lv$, and we claim that
\begin{equation}
    \mathrm{d}_\perp\Lv = f^*\alpha - \alpha, \qquad \textrm{where} \quad \alpha(r,\theta,z) = \frac{r^2}{2}\mathrm{d}\theta.
\label{eqn:dlvc}
\end{equation}
To see this, note that we can write $\Lv$ in terms of the field line mapping $f$ as
\begin{equation}
    \Lv(\gamma) = \int_{D_0}\Big[ \frac{1}{2\pi}\Big(f^*\Theta - \Theta\Big) + N(\gamma,\gammat(x))\Big]\widetilde{r}\,\mathrm{d}^2x,
\end{equation}
where $\Theta(\gamma,\gammat(x),z)$  is the angle made in the surface $D_z$ by the curve $\gamma$ and another curve $\gammat$ rooted at a point $x\in D_0$. Treating this as a function of the $\gamma$-startpoint $(r,\theta)\in D_0$, we differentiate to find
\begin{eqnarray}
\mathrm{d}_\perp\Lv &= \frac{1}{2\pi}\int_{D_0}\Big(f^*\mathrm{d}_\perp \Theta - \mathrm{d}_\perp\Theta\Big)\,\mathrm{d}^2x.
\end{eqnarray}
One may show by explicit calculation (see \ref{app:rtheta}) that
\begin{equation}
    \frac{1}{2\pi} \int_{D_0}\mathrm{d}_\perp\Theta\,\mathrm{d}^2x= \alpha,
    \label{eqn:rth}
\end{equation}
from which (\ref{eqn:dlvc}) follows.

Now consider the composite map $g:D_0\to D_0$, defined by $g=f(1)^{-1}\circ f'(1)$. Under our assumption $\Lv=\Lvp$, it follows from equation (\ref{eqn:dlvc}) that $g^*\alpha=\alpha$ \cite{yeates2013unique}. Equivalently, in components,
\begin{eqnarray}
\frac{g_r^2}{2}\frac{\partial g_\theta}{\partial r} = 0, \qquad
\frac{g_r^2}{2}\frac{\partial g_\theta}{\partial\theta} = \frac{r^2}{2}.
\end{eqnarray}
The first equation shows that $g_\theta(r,\theta) = G(\theta)$, but the fact that $f(1)=f'(1)$ on the side boundary  (by our assumption on $S_s$) means that $g_\theta(r,\theta)=\theta$. The second equation then shows that $g_r(r,\theta)=r$, so $g$ is the identity and hence $f(1)=f'(1)$.

This leaves us to show that there exists an isotopy between any two braided vector fields with the same field line mapping $f(1)$. For this we utilise some established results. The field lines $f(z)$ are parametrised curves in the group of diffeomorphisms of the unit disc, $\mathrm{Diffeo}(D^2)$. Since $f(0)=f'(0)=\mathrm{id}$ and $f(1)=f'(1)$, the composite map
\begin{equation}
f_c(z) = \left\{
\begin{array}{cc}
f(z) & z\in[0,1)\\
f'(2-z) & z\in[1,2].
\end{array}
\right.
\end{equation} 
forms a closed curve in this group which is anchored at the identity. If this curve can be shrunk to the identity within the group by a homotopy then the field lines can be linked by an isotopy. Thus we are interested in the homotopy group $\pi_1(\mathrm{Diffeo}(D^2))$ of curves in $\mathrm{Diffeo}(D^2)$ (anchored at the identity). A number of established facts related to this group allow us to answer the question. Firstly, there is a restriction map  
\begin{equation}
\mathrm{Diffeo}(D^2) \rightarrow \mathrm{Diffeo}(S^1),
\end{equation}
obtained by restricting each diffeomorphism in $D^2$ to the boundary $\partial D^2=S^1$.  This is a fibration
with fiber $\mathrm{Diffeo}_{\partial D^2}(D^2)$, namely diffeomorphisms of the disc that fix the boundary.
This implies that the map $\pi_1(\mathrm{Diffeo}((D^2))\rightarrow \pi_1(\mathrm{Diffeo}(S^1))$ is an isomorphism -- see \citeasnoun{mann2013diffeomorphism} in conjunction with Theorem 4.41 in \citeasnoun{hatcher2002algebraic}. Finally there is a homotopy equivalence between $\mathrm{Diffeo}(S^1)$ and the group $O^2$ of rotations \citeaffixed{smale1959diffeomorphisms}{see}.
Putting this information together implies that the set of closed curves in the space $\mathrm{Diffeo}(D^2)$, from which our map $f_c$ is drawn, can be categorised up to homotopy by their behaviour when restricted to the boundary of the cylinder. Then the fact that the homotopy group has the structure of $O^2$ implies that closed curves in $\mathrm{Diffeo}(D^2)$ are homotopic to the identity if and only if they describe no net integer rotation on the boundary. The boundary assumption in the theorem ensures this for $f_c$.
\end{proof}

\section{Relation to helicity}\label{sec:helicity}

Since each pairwise winding number $\L(\gamma,\gammat)$ is itself invariant under end-vanishing isotopies of ${\bf v}$, it is clear that any functional
\begin{equation}
    W_{\bf v}(\gamma) = \int_{D_0}w(x)\L(\gamma,\gammat(x))\,\mathrm{d}^2x
\end{equation}
will be similarly invariant. The field line winding $\Lv(\gamma)$, with $w\equiv 1$ is the simplest such invariant, and is already sufficient to completely determine the field line topology of ${\bf v}$. Nevertheless, there may be situations in which other weightings are of physical interest.

The most notable example is field line helicity, which is defined for the subset of divergence-free braided vector fields. Typical examples are magnetic fields (in magnetohydrodynamics) or vorticity fields. In this section, we shall denote such fields as ${\bf b}$ rather than ${\bf v}$. The field line helicity is the particular $W_{\bf v}$ invariant given by
\begin{equation}
    A_{\bf b}(\gamma) = \int_{D_0}J_0(x)b_z(x)\,\L(\gamma,\gammat(y))\,\mathrm{d}^2x,
    \label{eqn:flh}
\end{equation}
where $J_0$ is the Jacobian of the map $F^{-1}:S_0\to D_0$, and $b_z$ is the normal component of ${\bf b}$ on $S_0$. When $\mathcal{M}=\mathcal{C}$, we have $J_0\equiv 1$ so 
\begin{equation}
    A_{\bf b}(\gamma) = \int_{D_0}b_z(x)\,\L(\gamma,\gammat(x))\,\mathrm{d}^2x.
    \label{eqn:flh1}
\end{equation}
For this case, \citeasnoun{prior2014helicity} showed that (\ref{eqn:flh1}) is equivalent to the original definition of field line helicity, $A_{\bf b}(\gamma) = \int_\gamma{\bf a}\cdot\,\mathrm{d}{\bf l}$ (where ${\bf b}=\nabla\times{\bf a}$) provided that the vector potential ${\bf a}$ is in the so-called winding gauge
\begin{equation}
    {\bf a}(x^1, x^2, z) = \frac{1}{2\pi}\int_{D_z}\frac{{\bf b}(\tilde{x}^1,\tilde{x}^2,z)\times(x^1-\tilde{x}^1,x^2-\tilde{x}^2,0)}{(x^1-\tilde{x}^1)^2 + (x^2-\tilde{x}^2)^2}\,\mathrm{d}^2\tilde{x}.
    \label{eqn:winding}
\end{equation}
Indeed, they used the geometrical interpretation in terms of winding numbers to motivate this particular choice of gauge.

For more general $\mathcal{M}$, we can similarly relate (\ref{eqn:flh}) to a particular choice of vector potential. It is convenient to think of ${\bf b}$ as a differential 2-form $\beta = \mathrm{i}_{\bf b}\mathrm{vol}^3$ on $\mathcal{M}$, where $\mathrm{i}$ is the interior product and $\mathrm{vol}^3$ the volume form \cite{frankel2011geometry}. Then pushing forward the integral from $D_0$ to $S_0$, and using equation (3.13) of \citeasnoun{frankel2011geometry}, we have
\begin{eqnarray}
    A_{\bf b}(\gamma) &= \int_{S_0}\beta_z(\tilde{y})\,\L(\gamma,\gammat(\tilde{y}))\,\mathrm{d}\tilde{y}^1\wedge\mathrm{d}\tilde{y}^2\\
    &= \int_0^1\int_{S_0}\beta_z(\tilde{y})\frac{\mathrm{d}}{\mathrm{d}z}\Theta(\gamma, \gammat(\tilde{y}), z)\,\mathrm{d}\tilde{y}^1\wedge\mathrm{d}\tilde{y}^2\,\mathrm{d}z
\end{eqnarray}
Now, the divergence free property of ${\bf b}$ means that the form $\beta_z\,\mathrm{d}y^1\wedge\mathrm{d}y^2$ is conserved along field lines, so we can write
\begin{equation}
A_{\bf b}(\gamma) = \int_0^1\int_{S_z}\beta_z(\tilde{y})\frac{\mathrm{d}}{\mathrm{d}z}\Theta(\gamma, \gammat(\tilde{y}), z)\,\mathrm{d}\tilde{y}^1\wedge\mathrm{d}\tilde{y}^2\,\mathrm{d}z.
\end{equation}
Using (\ref{eqn:lgg}) and the fact that $\mathrm{d}\gamma_i/\mathrm{d}z = \beta_i/\beta_z$, we find that 
\begin{equation}
    A_{\bf b}(\gamma) = \int_\gamma\alpha,
\end{equation}
where the one-form $\alpha=\alpha_1\mathrm{d}y^1 + \alpha_2\mathrm{d}y^2 + \alpha_z\mathrm{d}z$ has components
\begin{eqnarray}
\alpha_1(y) &= -\frac{\beta_z(\tilde{y})(y^2-\tilde{y}^2)}{(y^1-\tilde{y}^1)^2 + (y^2-\tilde{y}^2)^2}\,\mathrm{d}\tilde{y}^1\wedge\mathrm{d}\tilde{y}^2,\\
\alpha_2(y) &= \frac{\beta_z(\tilde{y})(y^1-\tilde{y}^1)}{(y^1-\tilde{y}^1)^2 + (y^2-\tilde{y}^2)^2}\,\mathrm{d}\tilde{y}^1\wedge\mathrm{d}\tilde{y}^2,\\
\alpha_z(y) &= \frac{\beta_1(\tilde{y})(y^2-\tilde{y}^2) - \beta_2(y^1-\tilde{y}^1)}{(y^1-\tilde{y}^1)^2 + (y^2-\tilde{y}^2)^2}\,\mathrm{d}\tilde{y}^1\wedge\mathrm{d}\tilde{y}^2.
\end{eqnarray}
This one-form corresponds to a vector potential ${\bf a}$ on $\mathcal{M}$ which is the generalisation of the winding-gauge vector potential (\ref{eqn:winding}). 

Notice that $A_{\bf b}$ contains no additional topological information compared to $\Lb$; the difference is that it weights each field line differently according to $b_z$. This weighting has physical meaning because  $\beta_z\,\mathrm{d}y^1\wedge\mathrm{d}y^2$ is preserved along field lines -- thus the weighting is a property of the field line as a whole, not just an arbitrary function of the endpoints on $S_0$. An interesting consequence of the additional weighting is that, although $A_{\bf b}$ is invariant under end-vanishing isotopies of ${\bf b}$ like $\Lb$, it may behave differently from $\Lb$ if the domain $\mathcal{M}$ changes shape. This remains to be investigated further.

\section{Conclusions and comparison to other work} \label{sec:conclusion}

The primary development in this work (Theorem \ref{thm:top}) is the proof of a topological classification theorem for braided vector fields on tubular subdomains, which are homeomorphic to the cylinder as defined in Section \ref{sec:coords}.
Specifically, the field lines of a given braided field may be continuously deformed into those of another other field -- without moving their end-points -- if and only if the two fields have the same distribution of field line winding, $\Lv$. There is a small caveat: this invariant cannot distinguish overall full (integer) rotations. These could be detected solely from the field line mappings on the side boundary, and are unlikely to be problematic in practical applications. Our result strengthens an earlier theorem of \citeasnoun{yeates2014complete} and extends it to a broader class of vector fields; specifically to include those which have divergence. The measure $\Lv$ has recently been applied to study reconnection in a laboratory plasma experiment \cite{prior2018quantifying}; here we have formalised the underpinning result and extended it to tubular sub-domains that cannot be foliated by parallel planes, including tubular domains foliated by non-Euclidean surfaces.

The unique definition of $\Lv$ relies on the identification of a least distorted vector field ${\bf u}$, defined in Section \ref{sec:uv},  whose integral curves are mapped to straight lines on the cylinder (on which the field line winding is defined). Our choice of ${\bf u}$ and its mapping to the cylinder have three critical properties:
\begin{enumerate}
\item{The vector field ${\bf u}$ minimises the Dirichlet (distortion) energy amongst all braided vector fields on a given tubular domain $\mathcal{M}$ with a specified flux, \textit{i.e.} it minimises
\begin{equation}
E= \int_{\mathcal{M}}{\bf v}\cdot{\bf v}\,\rmd V,  \quad \textrm{for specified } \int_{S_0}\hat{\bf z}\cdot{\bf v}\,\rmd{S}.
\end{equation}
The latter constraint relates to the fact the energy varies with flux for a fixed topology, whereas we are interested purely in finding the vector field of least entangled topology. 
}\item{
In this mapping, all field lines of ${\bf u}$ have zero pairwise winding number, so the winding measure $\Lv$ is everywhere zero for ${\bf u}$ itself.
}\item{To determine which field line of ${\bf u}$ maps to which vertical line on $\mathcal{C}$, we use harmonic coordinates so as to avoid the erroneous measurement of entanglement such as in Figure \ref{goodbadcoords}.} 
\end{enumerate}
The combination of these three properties means that a non-trivial distribution $\Lv$ necessarily indicates non-trivial internal entanglement of the original vector field ${\bf v}$. This entanglement may be seen as increasing the energy above the minimum level enforced by the distorted global shape of the domain $\mathcal{M}$ itself. The only comparable construction to this of which we are aware is the so-called Mermin-Ho basis, which is defined on a pre-determined Riemannian manifold. Using the notion of parallel transport with respect to a preferred vector field (not specifically assigned in the theory) one can define a local basis on the manifold from which any local deviations of an arbitrary vector from this field are quantified \cite{mermin1976circulation,kamien2002geometry}. Our work differs in that we define and justify the choice of a preferred vector field. Moreover, this justification is based on a globally defined notion (the winding rate $\rmd \Theta(\gamma,\gammat,z)/\rmd z$), rather than a local notion (parallel transport). The Godbillion-Vey invariant see \textit{e.g.} \cite{arnold1992topological}, can be related to the helicity of the integral curves of the foliation of a \textbf{closed} domain so is not directly comparable here (see \cite{webb2014local} for a discussion of its applicability in MHD contexts).

The invariant $\Lv$ used in our classification theorem provides a fundamental description of the field line topology, in the sense that it depends only on the integral curves of the vector field and not on its magnitude or flux. This is in contrast to the widely used (field line) helicity invariants for divergence-free fields  \cite{webb2014local,russell2015evolution,cantarella2010new,yeates2018relative,moraitis2019relative}, which do depend on the flux. Indeed, we showed in Section \ref{sec:helicity} that the field line helicity -- effectively, the helicity of an infinitesimal flux tube surrounding a single field line -- takes a similar integral form to $\Lv$ but with an additional weighting of flux. As such, it mixes a purely topological quantity (the pairwise winding) with a physical quantity (the flux). This indicated the invariance of helicity under Euler flows (ideal motions) which vanish at the domain boundaries requires not only the invariance of $\L(\gamma,\gammat)$ but also the divergence-free condition, which ensures preservation of flux. Since the flux can change without altering the field line topology, $\Lv$ is revealed as a more fundamental quantity than the (field line) helicity.

\appendix
\section{Validity of the foliation defined by ${\bf u}$}\label{leastdistortedproof}
\begin{figure}
\centering
\subfloat[]{\includegraphics[width=6cm]{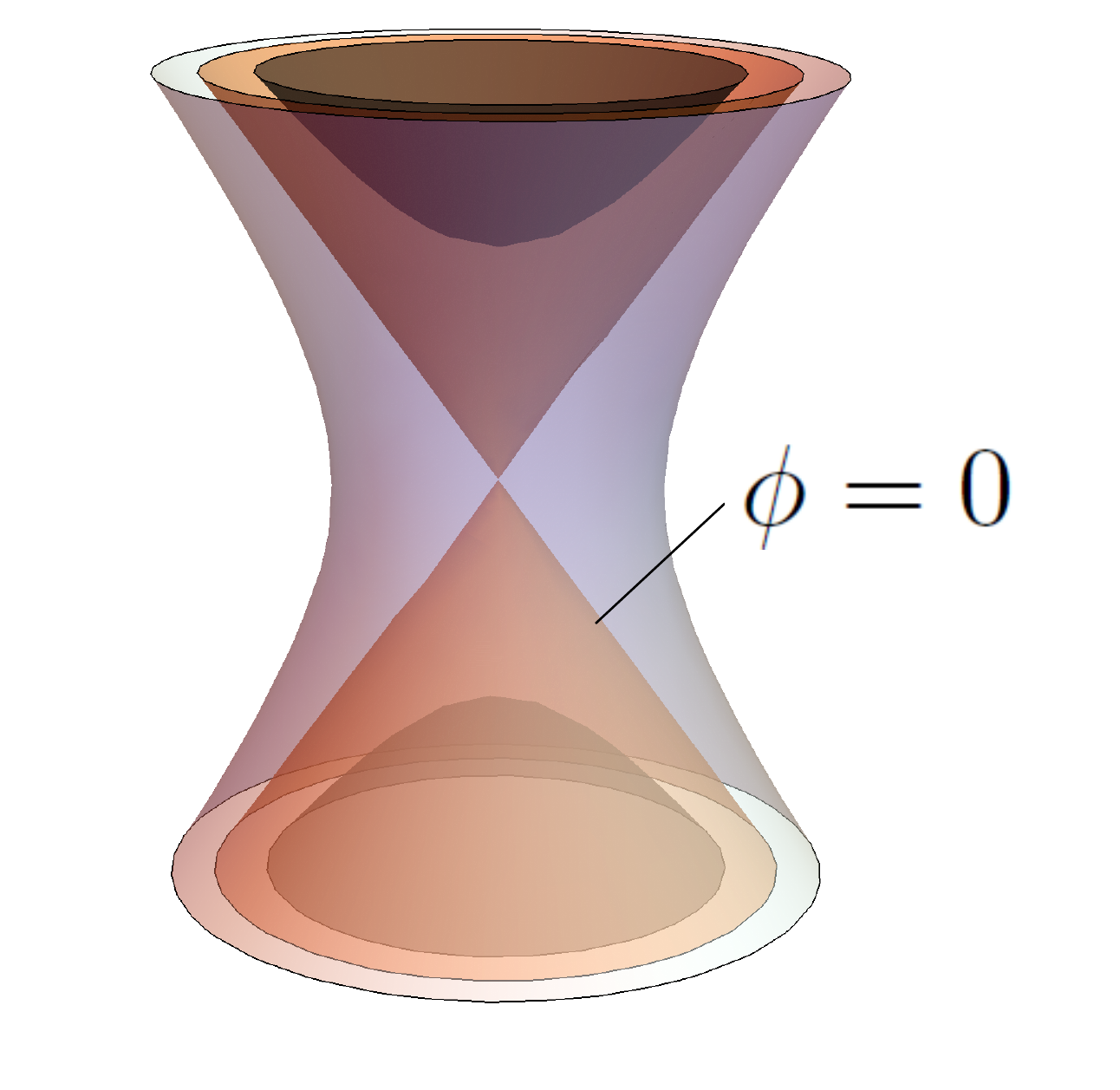}}\quad \subfloat[]{\includegraphics[width=6cm]{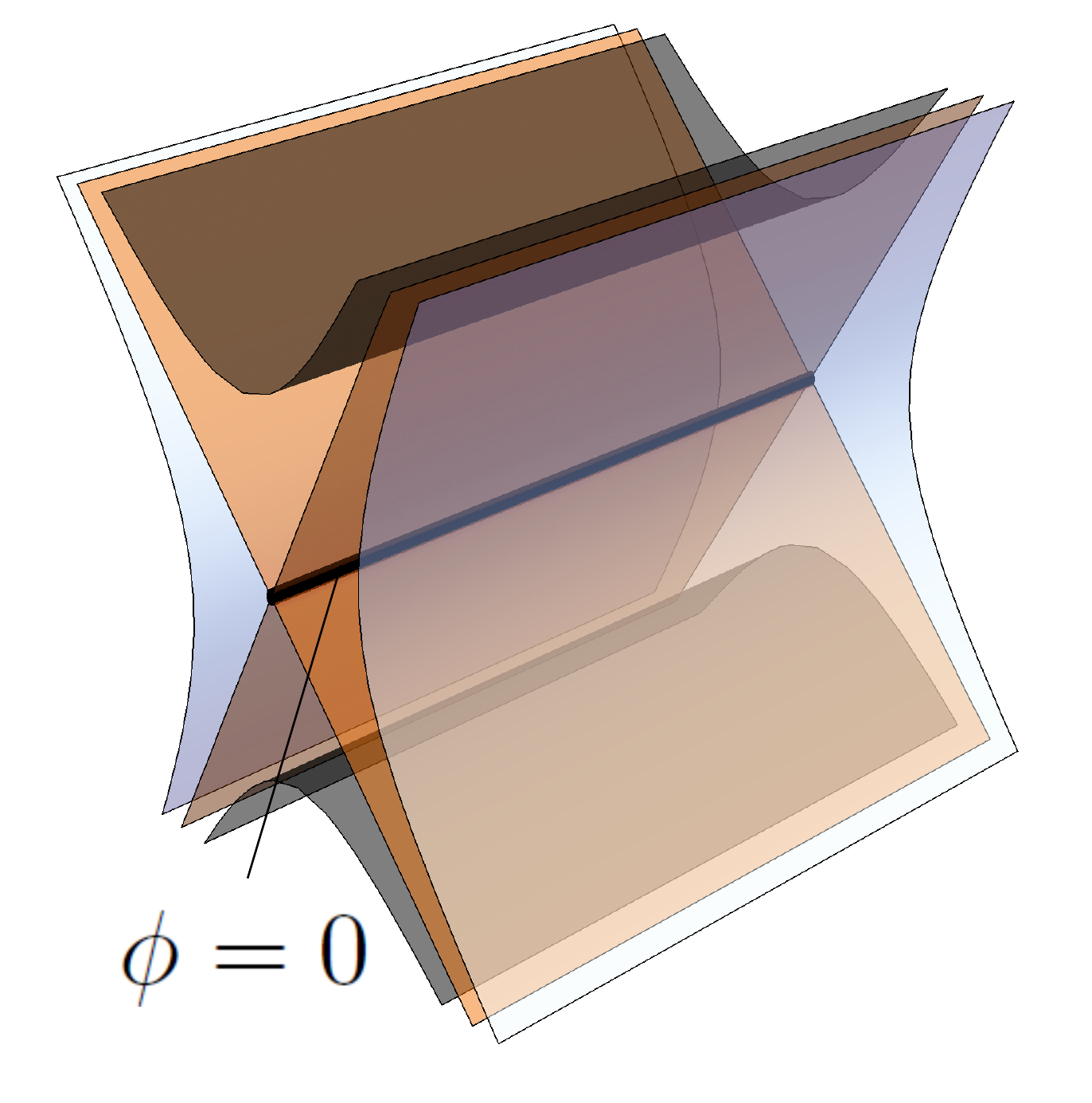}}
\caption{\label{singularstruct}Possible local structures of the constant-$\phi$ surfaces around null points of  harmonic fields ${\bf u}=\nabla\phi$. There are two possibilities, depending on whether the null point is isolated (a) or part of a null line (b).}
\end{figure}

In this appendix we show that the least-distorted harmonic field ${\bf u}$ defines a valid foliation of the domain $\mathcal{M}$. In particular, we will show that ${\bf u}\neq{\bf 0}$ everywhere on the interior of $\mathcal{M}$, so that there is a unique surface $S_z$ through all interior points and the winding integrals are well defined. It is possible to have points with ${\bf u}={\bf 0}$ on the boundary $\partial M$, but these do not prevent the existence of the foliation.

To rule out null points of our harmonic field ${\bf u}=\nabla\phi$, the idea is to consider the possible structures of constant-$\phi$ surfaces that could result from such points, and show that such surfaces cannot exist. If $p$ is a hypothetical point where ${\bf u}(p)={\bf 0}$, then the local linearisation of ${\bf u}$ around $p$ is constrained by the fact that ${\bf u}$ is harmonic. Firstly, the tensor ${\bf u}$ is symmetric due to $\nabla\times{\bf u}={\bf 0}$, meaning that it has real eigenvalues $\lambda_1$, $\lambda_2$, $\lambda_3$. Secondly, it is traceless due to $\nabla\cdot{\bf u}=0$, so that $\lambda_1+\lambda_2+\lambda_3=0$. There are two possible structures around $p$, shown in Figure \ref{singularstruct}.  If all three eigenvalues are non-zero, then in suitable local coordinates we may write
\begin{equation}
\label{pointsingularity}
{\bf u} = \nabla \phi \approx \pm \left(x,y,-2z\right),\mbox{ so } \phi \approx \pm \frac{1}{2}\left( x^2 + y^2 - 2z^2\right).
\end{equation}
Then the surface $\phi=0$ has the topology of two cones whose tips meet at $p$ (Figure \ref{singularstruct}a), separating two-sheet hyperboloids with $\phi>0$ and one-sheet hyperboloids with $\phi<0$. Alternatively, if one eigenvalue is $0$ then $p$ is part of a null line (Figure \ref{singularstruct}b) and, again in  suitable local coordinates, we may write 
\begin{equation}
{\bf u} = \nabla \phi = \pm \left(x,0,-z\right),\mbox{ so } \phi = \pm \left( x^2 - z^2\right).
\end{equation}
The $\phi=0$ surface is then a pair of intersecting planes. The null line could either be a closed loop or could intersect the boundary $\partial\mathcal{M}$. It is not possible to have a null surface, nor for all three eigenvalues to vanish at $p$ (unless ${\bf u}={\bf 0}$ on the whole domain).

To rule out null points and lines, we must consider how the $\phi$ surfaces extend outside the neighbourhoods of these structures. To do so, we use the fact that $\phi$ defines a continuous foliation of $\mathcal{M}$ everywhere outside the null set $\mathcal{N}=\{p\in\mathcal{M}\,|\, {\bf u}(p)={\bf 0}\}$. This follows from the Frobenius theorem.  So the $\phi=0$ surfaces from null points or lines extend smoothly away from these locations and can end only on $\mathcal{N}$ or on the boundary $\partial\mathcal{M}$. In fact, they can only intersect (transversely) the side boundary $S_s$, owing to the boundary conditions of constant $\phi$ on $S_0$ and $S_1$.

\begin{figure}
\centering
\subfloat[]{\includegraphics[width=4cm]{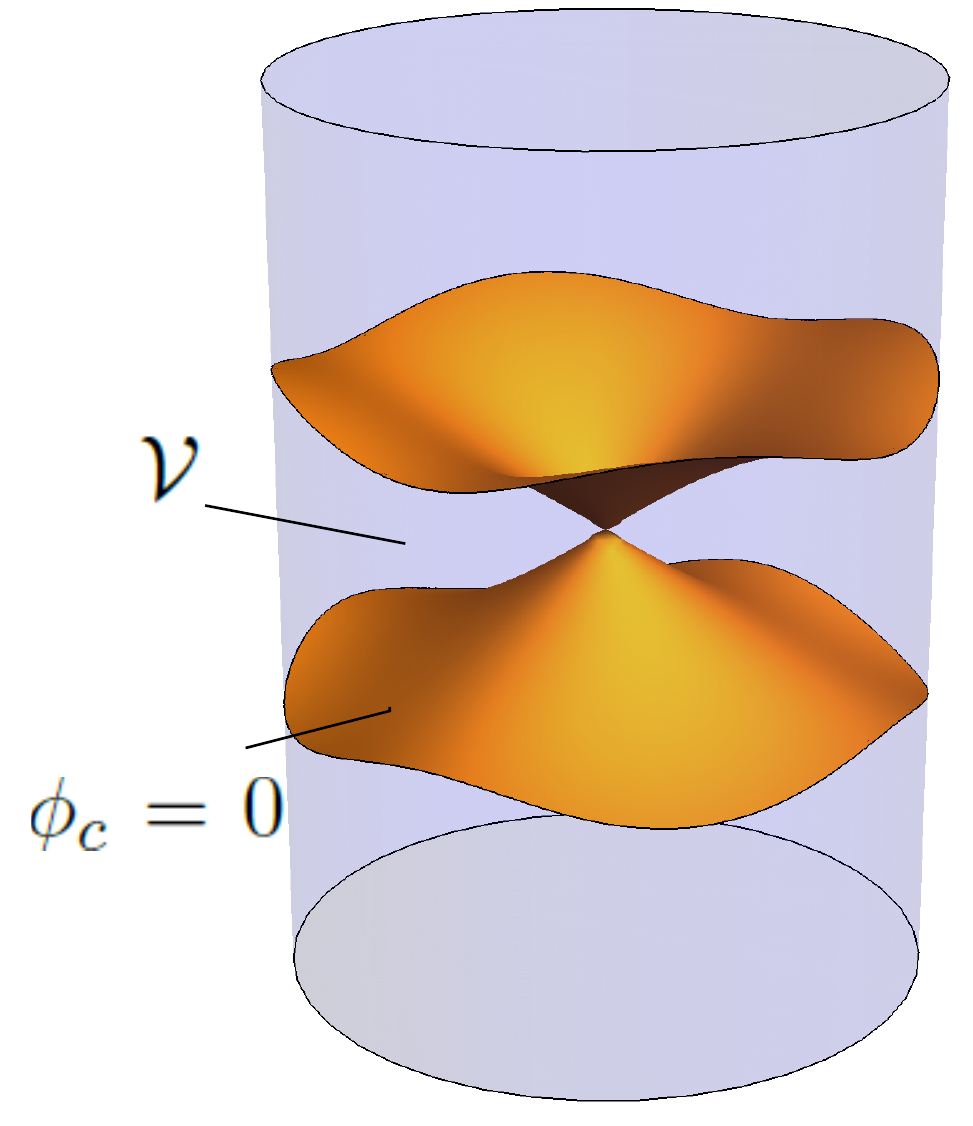}}
\subfloat[]{\includegraphics[width=4cm]{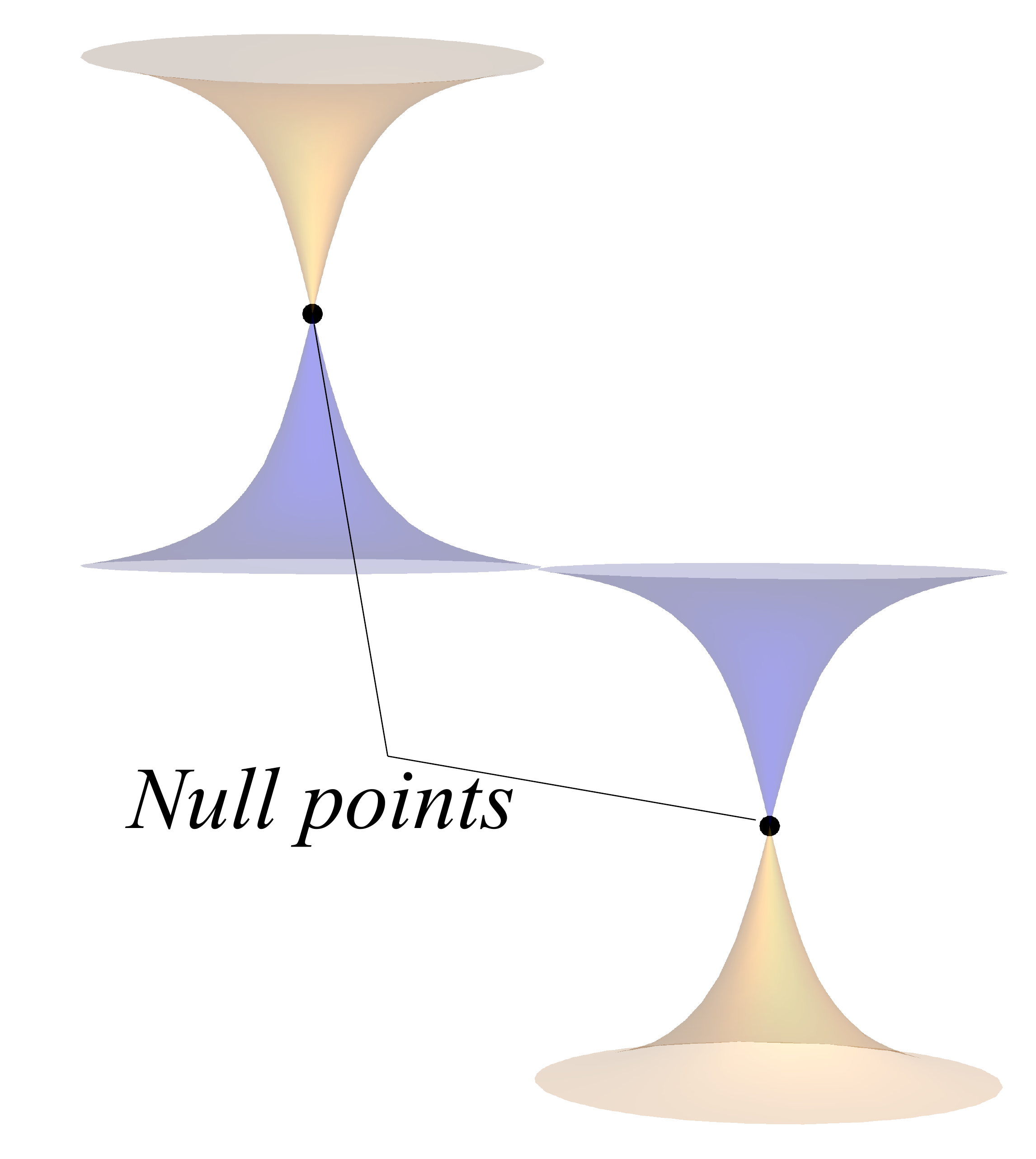}} \subfloat[]{\includegraphics[width=5cm]{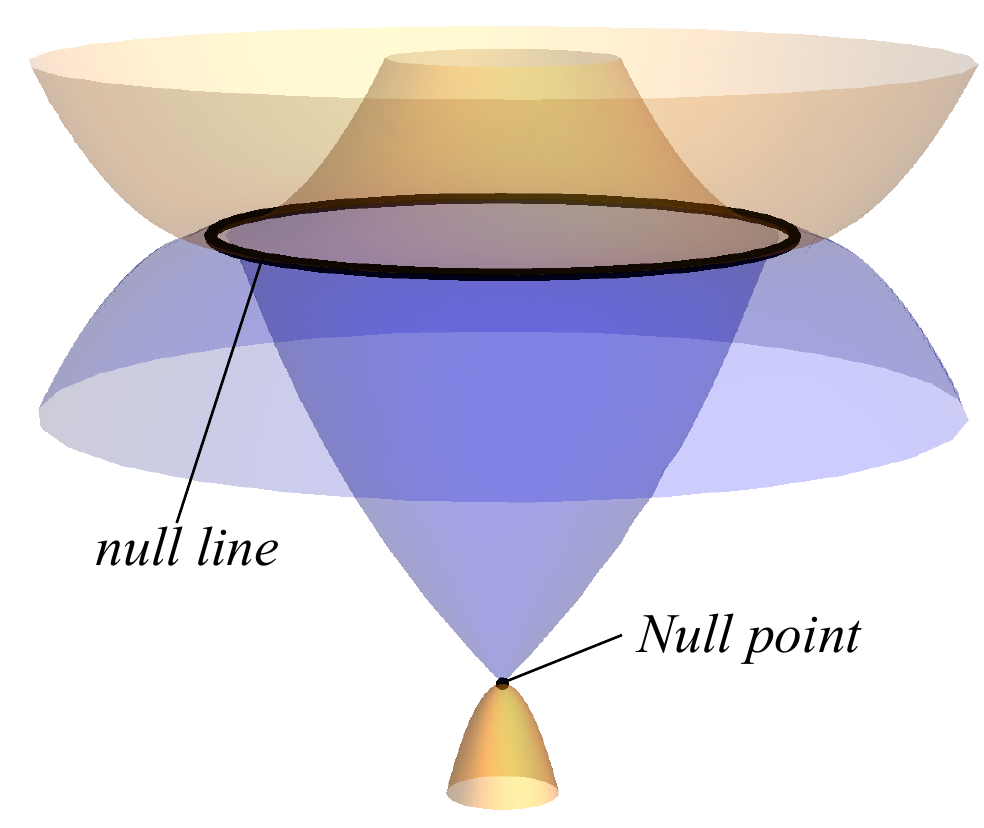}}
\subfloat[]{\includegraphics[width=3cm]{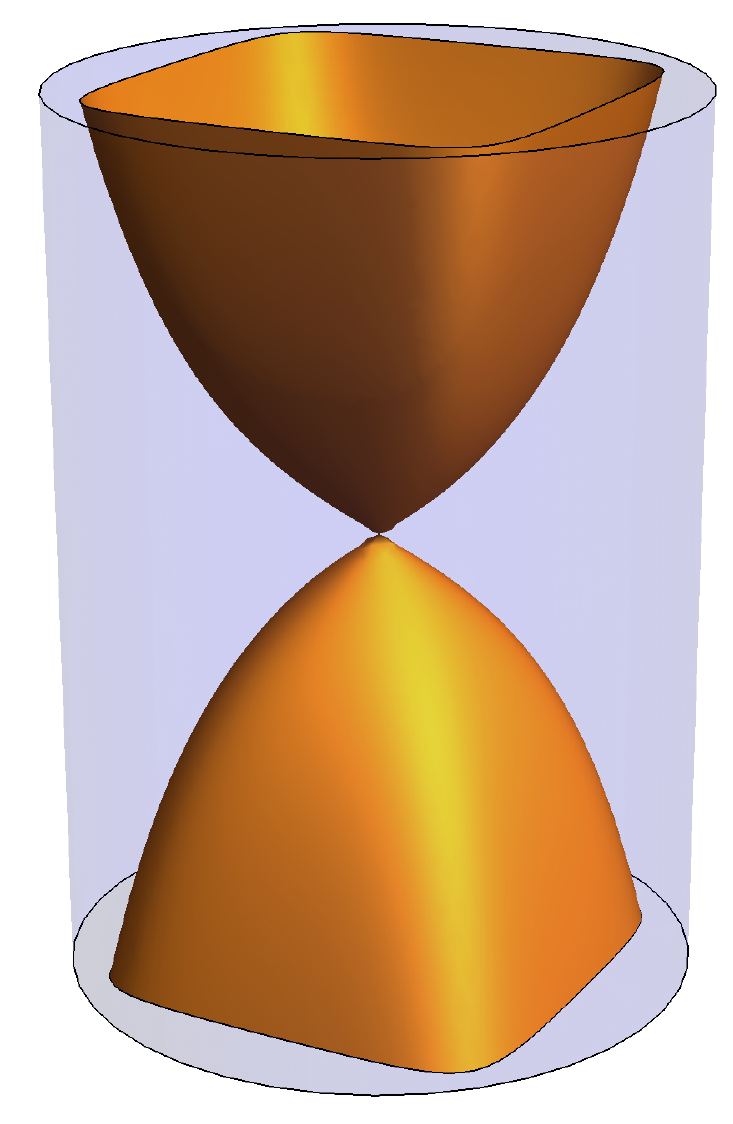}}
\caption{\label{singlesingularproof}Global extensions of the $\phi =0$ surfaces from (a) an isolated null point, (b) two isolated null points, and (c) a null point and a null line. In (a), the surfaces intersect the side boundary $S_s$, enclosing a volume $\mathcal{V}$. In (d) the $\phi=0$ surface intersects $S_0$ and $S_1$ transversely, but this is forbidden by the boundary conditions $\phi=\textrm{constant}$ on $S_0$ and $S_1$.} 
\end{figure}

Consider first a single null point in the interior, as in Figure \ref{singlesingularproof}(a). Each of the two $\phi=0$ surfaces must either be closed or end on $S_s$ (as shown in Figure \ref{singlesingularproof}a). To show that neither is possible, let $\hat{\bf n}$ be the outward normal from the indicated volume $\mathcal{V}$. Then observe that $\hat{\bf n}\cdot{\bf u}$ is non-zero with the same sign everywhere on these two surfaces where they bound $\mathcal{V}$, because $\phi < 0$ on one side and $\phi>0$ on the other. Since the two surfaces intersect $S_s$, and since $\hat{\bf n}\cdot{\bf u}=0$ on $S_s$, there must be a net flux in/out of $\mathcal{V}$, contradicting $\nabla\cdot{\bf u}=0$. If the surface were closed instead of intersecting $S_s$, we would similarly obtain a contradiction. So a single null point in the interior cannot exist.

This argument extends to rule out multiple null points or lines in the interior (of which there can only be a countable number). Tracing the $\phi=0$ surfaces from some null point or line, we can extend these surfaces, possibly incorporating further points in $\mathcal{N}$ -- as depicted in Figure \ref{singlesingularproof}(b) or (c) -- until the composite surfaces either close on themselves or intersect $S_s$. Either way we can find a finite volume enclosed with a non-zero flux of ${\bf u}$ through its boundary, which contradicts $\nabla\cdot{\bf u}=0$. 

Finally we point out that since the boundary is  Lipschitz
continuous it is possible to embed the conic structure of a null surface $\phi=0$ in the boundary. As an example this would be necessary on the bounding curves $\partial S_{0/1}$ for an hourglass domain morphology, owing to the boundary conditions on ${\bf u}$.

\section{Derivation of Equation (\ref{eqn:rth})} \label{app:rtheta}
To evaluate the left-hand side of (\ref{eqn:rth}) at the point $x_0=(r,\theta)$, consider the components of $\mathrm{d}_\perp\Theta$ in polar coordinates $(r,\theta)$. Differentiating the expression (\ref{eqn:Theta}) for $\Theta(x_0,x,z)$ gives
\begin{eqnarray}
\frac{\partial\Theta}{\partial r} &= \frac{(x_0^1-x^1)\sin\theta - (x_0^2-x^2)\cos\theta}{(x_0^1-x^1)^2 + (x_0^2-x^2)^2},\\
\frac{\partial\Theta}{\partial \theta} &= r\frac{(x_0^1-x^1)\cos\theta + (x_0^2-x^2)\sin\theta}{(x_0^1-x^1)^2 + (x_0^2-x^2)^2}.
\end{eqnarray}
To evaluate the integrals, change from $(x^1,x^2)$ to polar coordinates centered at $x_0$,
\begin{equation}
    x^1 = \rho\cos\varphi + x_0^1, \qquad x^2 = \rho\sin\varphi + x_0^2.
    \label{eqn:y}
\end{equation}
In these coordinates,
\begin{eqnarray}
\frac{\partial\Theta}{\partial r} = \frac{\sin(\varphi-\theta)}{\rho}, \qquad \frac{\partial\Theta}{\partial \theta} = -r\frac{\cos(\varphi-\theta)}{\rho}.
\end{eqnarray}
Integrating with respect to $(\rho,\varphi)$ we must account for the fact that the $\rho$ limit of integration is a function of $\varphi$. Substituting expressions (\ref{eqn:y}) into the equation $(x^1)^2 + (x^2)^2=1$ for the boundary leads to the integration limit
\begin{eqnarray}
\rho(\varphi) = -r\cos(\varphi-\theta) + \sqrt{r^2\cos^2(\varphi-\theta) - r^2 - 1}.
\end{eqnarray}
Using the phase behaviour of $\sin$ and $\cos$, we may then evaluate
\begin{eqnarray}
\frac{1}{2\pi}\int_{D_0}\frac{\partial\Theta}{\partial r}\,\rho\mathrm{d}\rho\mathrm{d}\varphi = \frac{1}{2\pi}\int_0^{2\pi}\sin(\varphi-\theta)\rho(\phi)\,\mathrm{d}\varphi = 0
\end{eqnarray}
and
\begin{eqnarray}
\frac{1}{2\pi}\int_{D_0}\frac{\partial\Theta}{\partial \theta}\,\rho\mathrm{d}\rho\mathrm{d}\varphi = \frac{r}{2\pi}\int_0^{2\pi}\cos^2(\varphi-\theta)\,\mathrm{d}\varphi = \frac{r}{2}.
\end{eqnarray}

\section*{References}
\bibliographystyle{jphysicsB}
\bibliography{topology}

\end{document}